\documentclass[a4paper,final]{amsart}
\usepackage[margin=0.8in]{geometry}

\usepackage{amsmath}%
\usepackage{amsfonts}%
\usepackage{amssymb}%
\usepackage{amsthm}
\usepackage{graphicx}
\usepackage{dsfont}
\usepackage{pdfcomment}
\usepackage{tikz}
\usepackage{tikz-3dplot}
\usepackage{subfigure}
\usepackage[colon,sort,square,numbers]{natbib}

\usepackage{thmtools}
\usepackage{thm-restate}
\usepackage{hyperref}
\usepackage{cleveref}
\usepackage{esvect}

\usepackage[parfill]{parskip}
\begingroup
\makeatletter
\@for\theoremstyle:=definition,remark,plain\do{%
	\expandafter\g@addto@macro\csname th@\theoremstyle\endcsname{%
		\addtolength\thm@preskip\parskip
	}%
}
\endgroup

\usetikzlibrary{decorations.markings}
\newtheorem{theorem}{Theorem}[subsection]

\newtheorem{corollary}[theorem]{Corollary}

\newtheorem{definition}[theorem]{Definition}
\newtheorem{example}{Example}
\allowdisplaybreaks

\newtheorem{lemma}[theorem]{Lemma}

\newtheorem{proposition}{Proposition}
\newtheorem{remark}[theorem]{Remark}

\newcommand{\field}[1]{\mathbb{#1}}

\DeclareMathOperator{\Area}{Area}

\DeclareMathOperator{\Real}{Re}
\DeclareMathOperator{\Imaginary}{Im}
\DeclareMathOperator{\grad}{grad}

\DeclareMathOperator{\area}{Area}
\DeclareMathOperator{\sign}{sign}
\renewcommand{\Re}{\Real}

\renewcommand{\Im}{\Imaginary}

\title[Discrete minimal surfaces]{Discrete minimal surfaces: critical points of the area functional from integrable systems}
\author{Wai Yeung Lam}

\address{Wai Yeung Lam\\
	Technische Universit\"at Berlin\\Institut f\"ur Mathematik\\
	Stra{\ss}e des 17.\ Juni 136\\
	10623 Berlin\\ Germany}

\email{lam@math.brown.edu}

\begin{document}
	
\begin{abstract}
	We obtain a unified theory of discrete minimal surfaces based on discrete holomorphic quadratic differentials via a Weierstrass representation. Our discrete holomorphic quadratic differential are invariant under M\"{o}bius transformations. They can be obtained from discrete harmonic functions in the sense of the cotangent Laplacian and Schramm's orthogonal circle patterns.
	
	We show that the corresponding discrete minimal surfaces unify the earlier notions of discrete minimal surfaces: circular minimal surfaces via the integrable systems approach and conical minimal surfaces via the curvature approach. In fact they form conjugate pairs of minimal surfaces.  
	
	Furthermore, discrete holomorphic quadratic differentials obtained from discrete integrable systems yield discrete minimal surfaces which are critical points of the total area.
\end{abstract}

\thanks{This research was supported by the DFG Collaborative Research Centre SFB/TRR 109 \emph{Discretization in Geometry and Dynamics}. The author would like to thank Ulrich Pinkall and Masashi Yasumoto for comments on the manuscript.}

\date{\today}

\maketitle

\section{Introduction}\label{sec:intro}

Minimal surfaces in Euclidean space are classical in Differential Geometry. They arise in the calculus of variations, in complex analysis and are related to integrable systems. A minimal surface is a surface that locally minimizes its area, or equivalently, its mean curvature vanishes identically. The Weierstrass representation for minimal surfaces asserts that locally each minimal surface is given by a pair of holomorphic functions. New minimal surfaces can be obtained from a given minimal surface via Bonnet, Goursat and Darboux transforms.

Structure-preserving discretization is ubiquitous, particularly in Discrete Differential Geometry. The goal of Discrete Differential Geometry is to develop a discrete theory as rich as its smooth counterpart. A common approach is to build upon a discrete analogue of some characterization from the smooth theory. However, several equivalent characterizations in the smooth theory might lead to different discrete theories even if their continuum limits are the same.

In a variational approach, functions are usually defined at vertices while critical points of functionals are sought via vertex-based variations. Pinkall and Polthier \cite{Pinkall1993} considered the total area of a triangulated surface in Euclidean space and suggested to define minimal surfaces as the critical points of the total area. In order to discuss conjugate minimal surfaces and their associated families, the notion of \emph{non-conforming meshes} was introduced  \cite{Polthier2002a}. However it is still not clear how new minimal surfaces can be obtained via Goursat and Darboux transforms.

On the other hand, many surfaces of interest arise with integrable structures, such as constant mean curvature surfaces. Bobenko and Pinkall \cite{Bobenko1996} considered a discrete Lax representation of isothermic surfaces and introduced circular minimal surfaces together with a Weierstrass representation. New minimal surfaces can be obtained via Bonnet, Goursat and Darboux transforms \cite{Hertrich1999}. However, this notion is believed to lack the variational property of minimal surfaces \cite{Polthier2002}.

Other characterizations of surfaces depend on notions of curvature. Smooth minimal surfaces in Euclidean space are characterized by vanishing mean curvature. Curvatures of circular quadrilateral surfaces based on Steiner's formula were proposed by Schief \cite{Schief2004,Schief2007}, compatible with circular minimal surfaces from the integrable systems approach. Bobenko, Pottmann and Wallner \cite{Bobenko2010a} in a similar way defined mean curvature for conical surfaces, which are polyhedral surfaces with face offsets. Conical surfaces with vanishing mean curvature are called conical minimal surfaces. Though special conical minimal surfaces are recently studied \cite{Sechelmann2016}, it is yet unknown if conical minimal surfaces in general admit conjugate minimal surfaces and an analogue of the Weierstrass representation. Furthermore, their relation to the variational approach is not clear.

In this paper we show that the theories of discrete minimal surfaces based on the variational approach, the integrable systems approach and the curvature approach are \emph{not} disjoint. Indeed they possess interesting relations with each other. 

First, we establish a new relation between the integrable systems approach and the curvature approach to discrete minimal surfaces in contrast to \cite{Schief2004,Schief2007}. We define two types of discrete minimal surfaces whose cell decompositions are arbitrary. One of the two types is based on the Christoffel duality of isothermic surfaces. Another is defined via vanishing mean curvature. These two types respectively generalize circular minimal surfaces (from the integrable systems approach) and conical minimal surfaces (from the curvature approach). We show that in our setting each discrete minimal surface of one type corresponds to a discrete surface of the other type, and they form a conjugate pair of minimal surfaces. These surfaces admit a Weierstrass representation in terms of discrete holomorphic quadratic differentials. In particular, each of them corresponds to a discrete harmonic function on a planar triangulated surface as considered in discrete complex analysis \cite{Smirnov2010}.

Second, we disprove the common belief that discrete minimal surfaces via discrete integrable systems lack the variational property. We show that discrete holomorphic quadratic differentials obtained from discrete integrable systems yield discrete minimal surfaces which are critical points of the total area. Every such discrete minimal surface is constructed from a \emph{P-net} \cite{Bobenko1999}, which is half the vertices of a quadrilateral isothermic surface \cite{Bobenko1996} with cross ratios -1. Particular examples of P-nets are given by Schramm's orthogonal circle patterns \cite{Schramm1997}. Bobenko, Hoffmann and Springborn \cite{Bobenko2006} obtained a variational construction of orthogonal circle patterns and showed that each circle pattern corresponds to a s-isothermic minimal surface, which is determined by the combinatorics of the curvature lines of a smooth surface. These discrete minimal surfaces were shown to converge to the smooth ones \cite{Bobenko2006, Matthes2005, Lan2010}. In this paper, we show that by throwing away half the vertices of a s-isothermic minimal surface, the resulting surface not only satisfies our notion of discrete minimal surfaces but is also a critical point of the area functional.

This paper makes use of a generalization of Christoffel duality \cite{Lam2015} and the observation that mean curvature for conical surfaces can be defined without referring to face offsets \cite{Karpenkov2014}. We further define the total area of a discrete surface with non-planar faces via the vector area on faces. As a result of these notions, we obtain connections between the integrable systems approach, the curvature approach and the variational approach to discrete minimal surfaces.

In section \ref{sec:discreteholo}, we introduce discrete holomorphic quadratic differentials with examples to motivate our notions of discrete minimal surfaces.

In section \ref{sec:dismin}, two types of discrete minimal surfaces are defined and shown to be conjugate to each other. Each discrete minimal surface induces an associated family of discrete surfaces with vanishing mean curvature.

In section \ref{sec:Weierstrass}, Goursat transforms of discrete minimal surfaces are constructed in terms of the Weierstrass representation. 

In section \ref{sec:selfstress}, we relate discrete minimal surfaces to self-stresses in the context of the rigidity theory of frameworks.

In section \ref{sec:criticalarea}, we introduce the area of a non-planar polygon and show that discrete surfaces in the associated family of discrete minimal surfaces from a P-net are critical points of the total area.

\section{Notations} \label{sec:notations}

\begin{definition}
	A \emph{discrete surface} is a cell decomposition of a surface $M=(V,E,F)$, with or without boundary. The set of
	vertices (0-cells), edges (1-cells) and faces (2-cells) are denoted by $V$, $E$ and $F$. Furthermore, we write $E_{int}$ as the set of interior edges and $V_{int}$ as the set of interior vertices. Without further notice we assume that all surfaces under consideration are oriented.
\end{definition}

Given a discrete surface $M$, we denote by $M^*=(V^*,E^*,F^*)$ the dual cell decomposition. Each vertex  $i \in V$ corresponds to a dual face $i^* \in F^*$. In particular, interior vertices of $M$ correspond to the interior faces of $M^*$, denoted by $F^*_{int}$, while the boundary vertices of $M$ correspond to the boundary (unbounded) faces of $M^*$.

\begin{definition}
	A \emph{realization} $N:V \to \field{R}^n$ of a discrete surface is an assignment of vertex positions in space. It is \emph{non-degenerate} if $N_i \neq N_j$ for every edge $\{ij\}$. Moreover we say a realization $N:V\to \field{S}^2$ is \emph{admissible} if $N_i \neq -N_j$ for every edge $\{ij\}$. 
\end{definition}

We make use of discrete differential forms from Discrete Exterior Calculus \cite{Desbrun2006a}. Given a discrete surface $M=(V,E,F)$, we denote by $\vec{E}$ the set of oriented edges and by $\vec{E}_{int}$ the set of interior oriented edges. An oriented edge from vertex $i$ to vertex $j$ is indicated by $e_{ij}$. A function $\omega:\vec{E}\to \field{R}$ is called a (primal) \emph{discrete 1-form} if
\[
\omega(e_{ij}) = -\omega(e_{ji}) \quad \forall e_{ij} \in \vec{E}.
\]
It is \emph{closed} if for every face $\phi=(v_0,v_1,\dots,v_n=v_0)$	
\[
\sum_{i=0}^{n-1} \omega(e_{i,i+1})=0.
\]
It is \emph{exact} if there exists a function $h:V \to \field{R}$ such that for $e_{ij} \in \vec{E}$
\[
\omega(e_{ij}) =  h_j -h_i =: dh(e_{ij}).
\]
Similarly, we consider discrete 1-forms on the dual mesh $M^*$ and these are called dual 1-forms on $M$. For every oriented edge $e$, we write $e^*$ as its dual edge oriented from the right face of $e$ to its left face. A function $\eta:\vec{E}^*_{int} \to \field{R}$ defined on oriented dual edges is called a dual 1-form if
\[
\eta(e^*_{ij}) = -\eta(e^*_{ji}) \quad \forall e^*_{ij} \in \vec{E}^*_{int}.
\]
A dual 1-form $\eta$ is \emph{closed} if for ever interior vertex $i \in V_{int}$ 
\[
\sum_j \eta(e^*_{i\!j}) = 0.
\] 
It is exact if there exists $f:F \to \field{R}$ (i.e. $f:V^* \to \field{R}$) such that
\[
df(e^*_{i\!j}):= f_{\phi_l}-f_{\phi_r} = \eta(e^*_{i\!j})
\] 
where $\phi_l$ denotes the left face of $e_{i\!j}$ and $\phi_r$ denotes the right face. In particular, exactness implies closedness:
\[
\sum_j df(e^*_{i\!j}) = 0
\]
for every interior vertex $i\in V_{int}$.

\section{Discrete holomorphic quadratic differentials}\label{sec:discreteholo}

We motivate the definitions of discrete minimal surfaces via a Weierstrass representation in terms of discrete holomorphic quadratic differentials.

In the smooth theory, a surface in Euclidean space is a minimal surface if it locally minimizes its area, or equivalently its mean curvature vanishes identically. The Weierstrass representation for smooth minimal surfaces in $\mathbb{R}^3$ is a classical application of complex analysis: 
\begin{proposition}
	Given a holomorphic function $h: U \subset \mathbb{C} \to \mathbb{C}$ and a meromorphic function $g: U \to \mathbb{C}$ such that $h g^2 $ is holomorphic, the surface $f:U \to \mathbb{R}^3$ defined by
	\begin{equation*}\label{eq:weierstrass}
		df = \Re \left( \left( \begin{array}{c}
			1-g^2 \\ i(1+g^2) \\ 2g 
		\end{array}\right) h(z) dz \right) = \Re \left( \left( \begin{array}{c}
		1-g^2 \\ i(1+g^2) \\ 2g 
	\end{array}\right) \frac{q}{dg} \right)
\end{equation*}
is a minimal surface where $q:= hg_z dz^2$ is a holomorphic quadratic differential. Its Gau{\ss} map $N$ is the stereographic projection of $g$
\begin{equation*}
	N=\frac{1}{|g|^2+1}\left(\begin{array}{c}2\Re g \\ 2 \Im g\\ |g|^2-1 \end{array}\right)
\end{equation*}
and its Hopf differential is $q$, which encodes the second fundamental form: The direction defined by a nonzero tangent vector $W$ is
\begin{equation*}
	\begin{array}{rcl}
		\text{an asymptotic direction} & \iff & q(W) \in i \mathbb{R}, \\
		\text{a principal curvature direction} & \iff & q(W) \in  \mathbb{R}.
	\end{array}	
\end{equation*}		
Locally, every minimal surface can be written in this form.
\end{proposition}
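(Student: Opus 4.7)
The plan is to set $\phi := \bigl(1-g^2,\ i(1+g^2),\ 2g\bigr)\,h \in \mathbb{C}^3$ so that $df = \Re(\phi\,dz)$, and then prove the proposition in four steps: (a) $f$ is well-defined; (b) $f$ is a conformal harmonic, hence minimal, immersion; (c) the stated formulas for $N$ and $q$ hold, and the asymptotic/principal dichotomy follows; (d) every minimal surface has this form locally.

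For (a), each component of $\phi$ must be holomorphic on $U$. The entries $(1-g^2)h = h - hg^2$ and $i(1+g^2)h = ih + ihg^2$ are immediately holomorphic by hypothesis. The third entry $2gh$ is the delicate case: at a pole of $g$ of order $n$, holomorphy of $hg^2$ forces $h$ to vanish to order $\geq 2n$, hence $gh$ vanishes to order $\geq n$ and is holomorphic there. Holomorphy of $\phi$ makes $\Re(\phi\,dz)$ a closed real 1-form, and integration (locally, or globally on a simply connected domain) produces $f$. For (b), a direct algebraic expansion gives $\phi\cdot\phi = h^2\bigl[(1-g^2)^2 - (1+g^2)^2 + 4g^2\bigr] = 0$ and $\phi\cdot\bar\phi = 2|h|^2(1+|g|^2)^2$. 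Thus $f_z$ is a holomorphic isotropic vector, so $f$ is a conformal immersion with metric $|h|^2(1+|g|^2)^2\,|dz|^2$; and since $\phi$ is holomorphic, $f_{z\bar z} = 0$, which under conformality is the standard characterization of vanishing mean curvature.

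For (c), I would use the identity $f_x\times f_y = -\tfrac{i}{2}\,\phi\times\bar\phi$ and expand termwise to obtain $\phi\times\bar\phi = 2i|h|^2(1+|g|^2)\bigl(2\Re g,\ 2\Im g,\ |g|^2-1\bigr)$; normalizing by $|f_x\times f_y| = |h|^2(1+|g|^2)^2$ gives the claimed stereographic formula for $N$. For the Hopf differential, $f_{zz} = \tfrac{1}{2}\phi_z$ with $\phi_z = (h'/h)\phi + 2g_z h\,(-g,\ ig,\ 1)$; the tangential summand $\phi$ is annihilated by $N$, while a quick pairing shows $\langle(-g,ig,1), N\rangle = -1$, so $\langle f_{zz},N\rangle = -hg_z$ and $q$ agrees with $hg_z\,dz^2$ up to the usual sign convention. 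The asymptotic/principal dichotomy then follows from the identity $II(W,W) = 2\Re q(W,W)$ valid for conformal minimal immersions: $II(W,W)=0$ iff $q(W)\in i\mathbb{R}$, and $W$ is a principal direction (extremum of $II$ at fixed length) iff $q(W)\in\mathbb{R}$.

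For (d), given a minimal $F:U\to\mathbb{R}^3$ in isothermal coordinates, set $\Phi := 2F_z$, which is holomorphic (harmonicity of $F$) and isotropic $\Phi\cdot\Phi = 0$ (conformality). Define $h := \tfrac{1}{2}(\Phi_1 - i\Phi_2)$ and $g := \Phi_3/(2h)$; the isotropy relation $\Phi_3^2 = -(\Phi_1 - i\Phi_2)(\Phi_1 + i\Phi_2)$ forces $\Phi = h\bigl(1-g^2,\ i(1+g^2),\ 2g\bigr)$ and $hg^2 = -\tfrac{1}{2}(\Phi_1+i\Phi_2)$, both holomorphic; the locus $h=0$ is handled by a local rotation of the target. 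The main obstacle I anticipate is step (c): the derivative $\phi_z$ is bulky, so one has to be careful to isolate the $g_z$-contribution that survives the projection onto $N$, and to keep sign and normalization conventions consistent between the Hopf differential and the second fundamental form. The pole-cancellation argument in (a) is a second subtle spot where both hypotheses on $h$ and $hg^2$ must be used jointly.
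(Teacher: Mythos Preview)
The paper does not prove this proposition; it is stated as classical background to motivate the discrete theory that follows. Your argument is the standard textbook derivation and is correct: the isotropy check $\phi\cdot\phi=0$, the metric computation $\phi\cdot\bar\phi=2|h|^2(1+|g|^2)^2$, the cross-product identification of $N$, and the splitting $\phi_z=(h'/h)\phi+2hg_z(-g,ig,1)$ with $\langle(-g,ig,1),N\rangle=-1$ all go through as you outline, and the pole-cancellation argument in step~(a) is the right way to handle the meromorphy of $g$. Since there is no proof in the paper to compare against, there is nothing further to say.
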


A M\"{o}bius invariant notion of discrete holomorphic quadratic differentials was introduced by \citet{Lam2015a}:

\begin{definition}
	Given a non-degenerate realization $z:V \to \field{R}^2 \cong \field{C}$ of a discrete surface $M=(V,E,F)$ in the plane, a function $q:E_{int} \to \field{R}$ defined on interior edges is a \emph{discrete holomorphic quadratic differential} with respect to $z$ if for every interior vertex $i \in V_{int}$
	\begin{gather*}
		\sum_{j} q_{ij} = 0, \\
		\sum_{j} q_{ij}/ (z_j -z_i) = 0.
	\end{gather*} 
\end{definition}

\begin{proposition}[\cite{Lam2015a}] \label{thm:mobius}
	Suppose $z:V \to \field{C}$ is a non-degenerate realization of a discrete surface and $\Phi:\field{C}\cup \{\infty\} \to \field{C}\cup \{\infty\}$ is a M\"{o}bius transformation which does not map any vertex to infinity. Then $q$ is a holomorphic quadratic differential on $z$ if and only if $q$ is a holomorphic quadratic differential on $w:= \Phi \circ z$.
\end{proposition}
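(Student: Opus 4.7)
The plan is to reduce to generators of the Möbius group and check each one separately. Since the inverse of a Möbius transformation is again Möbius, establishing only the ``if'' direction of the equivalence is enough. The group of Möbius transformations of $\field{C}\cup\{\infty\}$ is generated by translations $z \mapsto z + c$, complex scalings $z \mapsto \lambda z$ with $\lambda \neq 0$, and the inversion $z \mapsto 1/z$, so it suffices to verify that each of these three types preserves both defining conditions of a discrete holomorphic quadratic differential (using, in the inversion case, a preliminary translation to ensure no vertex is sent to $0$ or $\infty$).

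The condition $\sum_j q_{ij} = 0$ is purely combinatorial and involves no realization, so it is trivially preserved under every coordinate change. For translations one has $w_j - w_i = z_j - z_i$, and for scalings $w_j - w_i = \lambda(z_j - z_i)$; in both cases the second condition $\sum_j q_{ij}/(w_j - w_i) = 0$ follows immediately from the analogous sum for $z$.

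The only genuine computation is for the inversion $w = 1/z$. From $w_j - w_i = (z_i - z_j)/(z_i z_j)$ I would expand
$$\frac{1}{w_j - w_i} = -\frac{z_i z_j}{z_j - z_i} = -z_i - \frac{z_i^2}{z_j - z_i},$$
using the partial-fraction identity $z_j/(z_j - z_i) = 1 + z_i/(z_j - z_i)$. Summing against $q_{ij}$ at an interior vertex $i$ then gives
$$\sum_j \frac{q_{ij}}{w_j - w_i} = -z_i \sum_j q_{ij} - z_i^2 \sum_j \frac{q_{ij}}{z_j - z_i} = 0,$$
since both sums on the right vanish by the hypothesis that $q$ is a discrete holomorphic quadratic differential for $z$.

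This computation is the heart of the argument and explains precisely why M\"obius invariance forces \emph{both} of the two defining conditions: one is needed to cancel the constant term $-z_i$ produced by inversion, the other is needed to cancel the remaining $-z_i^2/(z_j - z_i)$ term. The ``main obstacle'' is therefore not analytic but rather the observation that inversion mixes the two sums linearly; once the partial-fraction decomposition above is in hand, the rest of the proof is bookkeeping plus the general fact that translations and scalings are obviously harmless.
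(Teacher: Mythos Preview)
Your proof is correct and is essentially the same as the paper's: both reduce to generators of the M\"obius group, dispose of the affine generators as trivial, and handle the inversion $w=1/z$ via the identity $-z_iz_j/(z_j-z_i)=-z_i-z_i^2/(z_j-z_i)$, which is exactly the computation the paper records. Your write-up is slightly more explicit about the biconditional (using that the inverse of a M\"obius map is M\"obius) and about avoiding $z_i=0$ before inverting, but these are cosmetic additions rather than a different approach.
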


\begin{proof}
	Since M\"{o}bius transformations are generated by Euclidean transformations and inversions, it suffices to consider the inversion in the unit circle at the origin
	\[
	w:=\Phi(z) = 1/z.
	\]
	We have
	\begin{align*}
		\sum_{j} q_{i\!j}/ (w_j-w_i) = \sum_{j} -z_i z_j q_{i\!j}/ (z_j -z_i) = -z_i \sum_{j} q_{i\!j} - z_i^2 \sum_{j} q_{i\!j} /(z_j-z_i).
	\end{align*}
	Thus the claim follows.
\end{proof}

Discrete holomorphic quadratic differentials are closely related to discrete conformality of triangulated surfaces, namely the theory of length cross ratios and circle patterns. It was shown in \cite{Lam2015a} that discrete holomorphic quadratic differentials parametrize the change of the logarithmic cross ratios under infinitesimal conformal deformations.

We first consider examples from discrete harmonic functions in linear discrete complex analysis. Discrete harmonic functions were introduced on the square lattice by Ferrand \cite{Ferrand1944}, McNeal \cite{McNeal1946} and Duffin \cite{Duffin1956} by means of a discrete Cauchy-Riemann equation. This notion was later generalized to triangulated surfaces and led to the cotangent Laplacian. Discrete harmonic functions appear in various contexts, such as the finite-element approximation of the Dirichlet energy \cite{Pinkall1993}, and have applications in statistical mechanics \cite{Smirnov2010}.

\begin{example}[Discrete harmonic functions] Given a non-degenerate realization $z:V \to \field{C}$ of a triangulated disk, a function $u: V \to \field{R}$ is a \emph{discrete harmonic function} in the sense of the cotangent Laplacian if 
	\begin{equation*} \sum_j (\cot \angle jki + \cot \angle ilj) (u_j - u_i) =0  \quad \forall i \in V_{int} \end{equation*}
	where $\{ijk\}$ and $\{jil\}$ are two neighboring faces containing the edge $\{ij\}$. 
	
	\citet{Lam2015a} showed that the space of holomorphic quadratic differentials is a vector space isomorphic to the space of discrete harmonic functions modulo linear functions. A function $u:V \to \field{R}$ defined at the vertices of a triangulated surface can be extended linearly on each face. Its gradient $\grad u:F \to \field{C}$ is constant over faces:
	\[
	(\grad_z u)_{ijk} =  i \frac{u_i dz(e_{jk})+ u_j dz(e_{ki})+u_k dz(e_{ij})}{2 A_{ijk}}. 
	\]
	Then it was proved that every holomorphic quadratic differential $q: E_{int} \to \field{R}$ is of the form
	\begin{align} \label{eq:hessian}
		q_{ij} = idu_z(e^*_{ij}) dz(e_{ij})  \quad  \forall e_{ij} \in \vec{E}_{int}
	\end{align}
	for some harmonic function $u:V \to \field{R}$ where
	\[
	du_z(e^*_{ij}) := \overline{(\grad_z u)_{ijk}} - \overline{(\grad_z u)_{jil}}.
	\]
	It is interesting that Equation \eqref{eq:hessian} also appeared in  \cite[Definition 4]{Polthier2003}. 
\end{example}

On the other hand, discrete holomorphic quadratic differentials are closely related to discrete integrable systems. They appear in the discrete equations of Toda type \cite{Adler2000,Bobenko2003}.

In the following, we focus on examples from discrete holomorphic maps in nonlinear discrete complex analysis, which are known to possess discrete integrable structures. In Section \ref{sec:criticalarea} we show that their corresponding discrete minimal surfaces are critical points of the total area.

\begin{definition} \label{def:pnet}
	A cell decomposition $D=(V,E,F)$ of a simply connected surface is a \emph{P-graph} if it satisfies the following:
	\begin{enumerate}
		\item every interior vertex has degree 4 and
		\item every face has even number of edges.
	\end{enumerate}
	If $D$ is a P-graph, there exists a \emph{P-labeling} $\mu: E_{int} \to \pm 1$ such that at each vertex, two opposite edges are labeled "+1" and the other two opposite edges are labeled "-1".
\end{definition}

The square lattice $\field{Z}^2$ is a P-graph. There is a P-labeling $\mu : E(\field{Z}^2) \to \pm 1$ that is $+1$ on ``horizontal" edges $\{(m,n),(m+1,n)\}$ and $-1$ on ``vertical" edges $\{(m,n),(m,n+1)\}$.

\begin{example}[Schramm's orthogonal circle patterns]
	We consider an orthogonal circle pattern in the complex plane with the combinatorics of a P-graph $D$: Each face of $D$ corresponds to a circle and neighboring faces correspond to two circles intersecting orthogonally. Furthermore, every vertex of $D$ is the intersection point of the neighboring circles, which yields a map $z:V \to \field{C}$.
	
	\begin{figure}[h]
		\centering
		\includegraphics[width=0.85\textwidth]{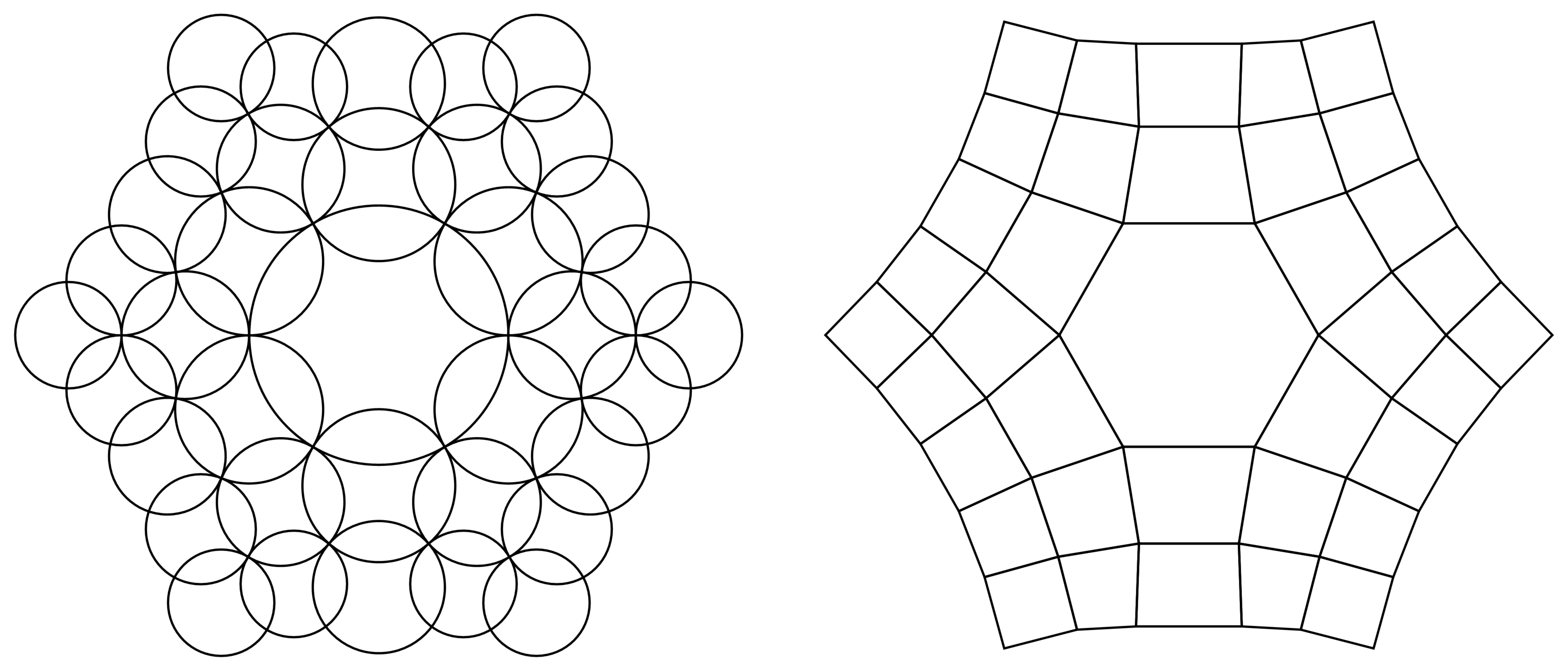}
		\caption{The intersection points of an orthogonal circle pattern (left) yields a P-net $z: V \to \field{C}$ (right) with its P-labeling $\mu:E_{int} \to \pm 1$ as a canonical discrete holomorphic quadratic differential.}
		\label{fig:planarcirc}
	\end{figure}
	
	Neighboring circles intersecting orthogonally implies that if one maps any interior vertex $v_0$ to infinity by inversion, the neighboring four vertices form a rectangle. In particular, a rectangle has opposite edges parallel. Denoting $v_1,v_2,v_3,v_4$ the neighboring vertices of $v_0$, we thus have
	\begin{equation*}
		\frac{1}{z_1 - z_0} - 	\frac{1}{z_2 - z_0} + 	\frac{1}{z_3 - z_0} - 	\frac{1}{z_4 - z_0} =0.
	\end{equation*}
	Hence the P-labeling $\mu$ is a discrete holomorphic quadratic differential. 
\end{example}

\begin{figure}[h]
	\centering
	\includegraphics[width=0.3\textwidth]{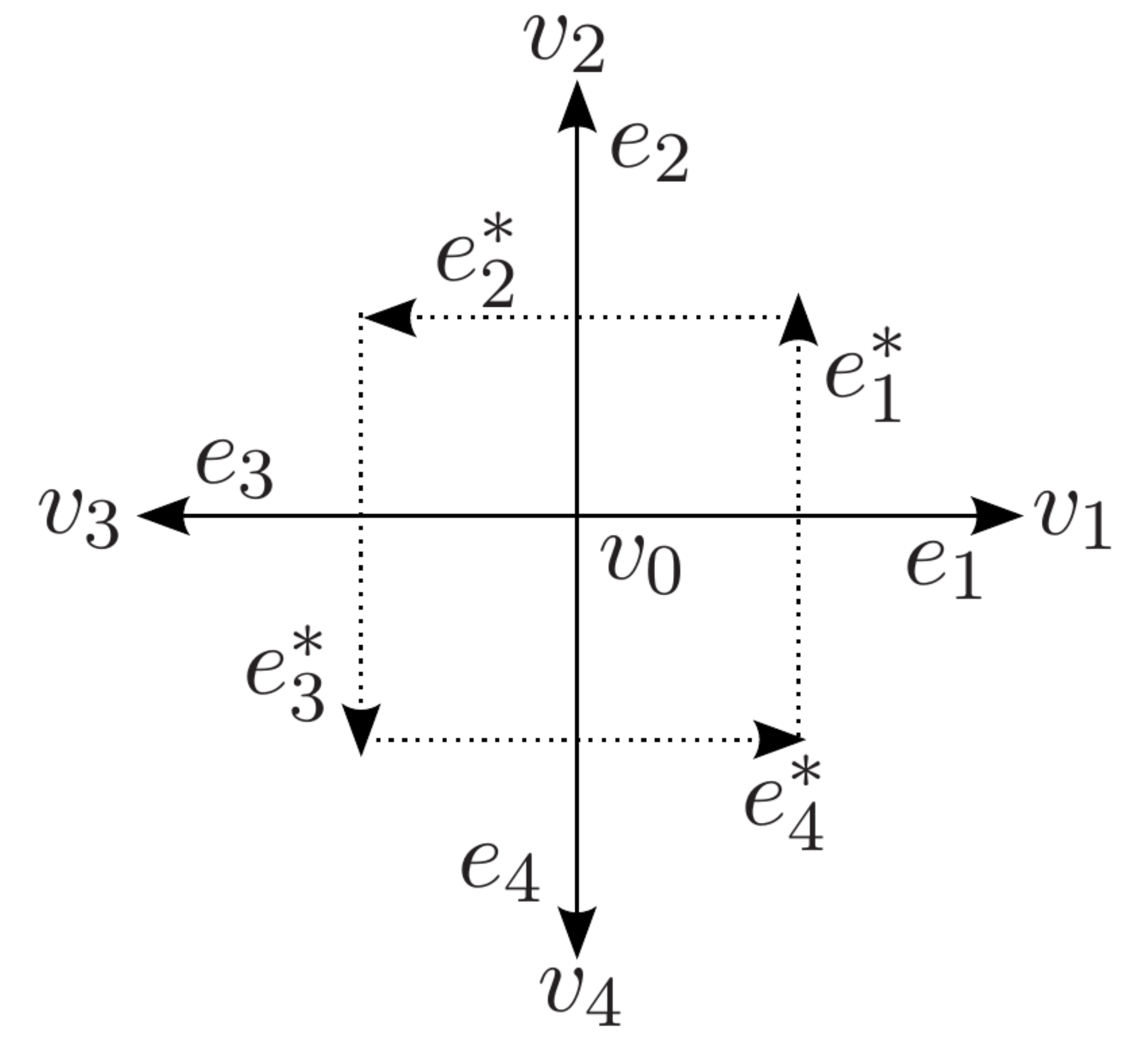}
	\caption{The edges of $D$ around a vertex $v_0$ are indicated by solid lines. The edges of the dual face $v_0^*$ are indicated by dotted lines.}
	\label{fig:indices}
\end{figure}

In the previous example, the necessary and sufficient condition for a P-labeling to be a discrete holomorphic quadratic differential is the \emph{parallelogram property}. It does not play any role whether faces are cyclic. 

\begin{definition}[\cite{Bobenko1999}] \label{def:parall}
	A non-degenerate realization $z:V \to \field{C}$ of a P-graph is a \emph{P-net} if it possesses the parallelogram property:
	Mapping any interior vertex $z_0:=z(v_0)$ to infinity by inversion, then the image of its four neighboring vertices $z_1,z_2,z_3,z_4$ form a parallelogram, i.e. 
	\begin{align} \label{eq:parallel}
		\frac{1}{z_1 - z_0} - 	\frac{1}{z_2 - z_0} + 	\frac{1}{z_3 - z_0} - 	\frac{1}{z_4 - z_0} =0.
	\end{align}
	(See Figure \ref{fig:indices} for the indices.) In particular, the P-labeling $\mu$ is a discrete holomorphic quadratic differential.
\end{definition}

P-nets were introduced by Bobenko and Pinkall \cite{Bobenko1999} while studying discrete integrable systems of isothermic surfaces. By replacing $1/(z_i-z_0)$ in Equation \eqref{eq:parallel} with $(z_i - z_0)/|z_i - z_0|^2$, P-nets can be defined in $\field{R}^n$.

\begin{example}\label{example:cr-1}
	By considering a discrete Lax representation, Bobenko and Pinkall \cite{Bobenko1996} introduced the notion of quadrilateral isothermic surfaces, which provides a discrete analogue of conformal curvature line parametrizations. We consider a quadrilateral isothermic surface $z: V(\field{Z}^2) \to \mathbb{C}$ in the complex plane, i.e. each face is a quadrilateral with cross-ratio
	\[
	cr(z_{m,n},z_{m+1,n},z_{m+1,n+1},z_{m,n+1}) =  -1 \quad \forall m,n \in \field{Z}.
	\]
	A crucial property is that there exists another quadrilateral isothermic surface $z^*: V(\field{Z}^2) \to \field{C}$ such that  
	\[
	z^*_{m+1,n} - z^*_{m,n} =  \frac{1}{\bar{z}_{m+1,n}-\bar{z}_{m,n}}, \quad
	z^*_{m,n+1} - z^*_{m,n} = - \frac{1}{\bar{z}_{m,n+1}-\bar{z}_{m,n}}.
	\]
	Furthermore, the diagonals of $z^*$ satisfy
	\begin{gather}
		z^*_{m+1,n} -z^*_{m,n+1} = \frac{2}{\bar{z}_{m+1,n+1}-\bar{z}_{m,n}}, \label{eq:diagonal1}\\
		z^*_{m+1,n+1} - z^*_{m,n} = \label{eq:diagonal2} \frac{2}{\bar{z}_{m+1,n}-\bar{z}_{m,n+1}} 
	\end{gather}
	as shown by \citet[Corollary 4.33]{Bobenko2008}. 
	
	Now, we consider half the vertices of a quadrilateral isothermic surface. We denote by $\field{Z}^2_b$, $\field{Z}^2_w$ the square lattices formed by the vertices
	\begin{gather*}
		V(\field{Z}^2_b):=\{(m,n)\in \field{Z}^2| m+n \text{ even}\}, \\
		V(\field{Z}^2_w):=\{(m,n)\in \field{Z}^2| m+n \text{ odd}\}.
	\end{gather*} 
	If $z:V(\field{Z}^2) \to \field{C}$ is an isothermic net, then \eqref{eq:diagonal1} and \eqref{eq:diagonal2} imply that for $(m,n)\in V(\field{Z}^2)$
	\begin{align*}
		0 =&\frac{1}{2}(\bar{z}^*_{m+1,n} -\bar{z}^*_{m,n+1} + \bar{z}^*_{m,n+1} -  \bar{z}^*_{m-1,n} +  \bar{z}^*_{m-1,n} -\bar{z}^*_{m,n-1} + \bar{z}^*_{m,n-1} -\bar{z}^*_{m+1,n}) \\
		=&\frac{1}{z_{m+1,n+1}-z_{m,n}} - \frac{1}{z_{m-1,n+1}-z_{m,n}} + \frac{1}{z_{m-1,n-1}-z_{m,n}} -\frac{1}{z_{m+1,n-1}-z_{m,n}}.
	\end{align*}
	Hence, $z^*|_{\field{Z}^2_w}$ and $z^*|_{\field{Z}^2_b}$ are P-nets and their P-labellings are discrete holomorphic quadratic differentials. 
\end{example}

In fact, the conditions for discrete holomorphic quadratic differentials are equivalent to the closedness of a certain $\field{C}^3$-valued dual 1-form, which yields a Weierstrass representation for discrete minimal surfaces. This formula was first derived from the $sl(2,\field{C})$-formulation of infinitesimal conformal deformations by \citet{Lam2015a}.

\begin{lemma}\label{lem:weierstrass}
	Suppose $z:V \to \field{C}$ is a non-degenerate realization of a simply connected discrete surface. A function $q:E_{int} \to \field{R}$ is a discrete holomorphic quadratic differential if and only if there exists $\mathcal{F}:V^* \to \field{C}^3$ such that for every edge $\{ij\} \in E_{int}$
	\begin{equation*}
		d\mathcal{F}(e^*_{ij}) =\frac{q_{ij}}{z_j -z_i} \left( \begin{array}{c}
			1-z_i z_j \\ i(1+z_i z_j) \\ z_i + z_j
		\end{array}\right).
	\end{equation*}
\end{lemma}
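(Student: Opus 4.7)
The plan is to verify that the prescribed $\mathbb{C}^3$-valued dual 1-form
\[
\eta(e^*_{ij}) := \frac{q_{ij}}{z_j - z_i} \begin{pmatrix} 1 - z_i z_j \\ i(1 + z_i z_j) \\ z_i + z_j \end{pmatrix}
\]
is a well-defined closed dual 1-form if and only if the two vertex conditions defining a discrete holomorphic quadratic differential hold. Since the discrete surface is simply connected, closedness of $\eta$ will be equivalent to exactness on the dual complex, which is what produces $\mathcal{F}:V^*\to\mathbb{C}^3$.

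First I would check that $\eta$ satisfies $\eta(e^*_{ij}) = -\eta(e^*_{ji})$: the factor $1/(z_j - z_i)$ changes sign under swapping $i\leftrightarrow j$, while $q_{ij}$, $z_i z_j$ and $z_i + z_j$ are symmetric, so $\eta$ is a valid dual 1-form. Then, for an interior vertex $i$, I would compute $\sum_j \eta(e^*_{ij})$ componentwise by performing the algebraic decompositions
\[
\frac{1 - z_i z_j}{z_j - z_i} = -z_i + \frac{1 - z_i^2}{z_j - z_i}, \qquad \frac{1 + z_i z_j}{z_j - z_i} = z_i + \frac{1 + z_i^2}{z_j - z_i}, \qquad \frac{z_i + z_j}{z_j - z_i} = 1 + \frac{2 z_i}{z_j - z_i}.
\]
Substituting these and writing $A := \sum_j q_{ij}$ and $B := \sum_j q_{ij}/(z_j - z_i)$, the three components of $\sum_j \eta(e^*_{ij})$ become linear combinations of $A$ and $B$ with coefficients in $\mathbb{Z}[z_i]$. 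In particular, if $A = B = 0$ (i.e.\ $q$ is a holomorphic quadratic differential), then each component vanishes, establishing closedness.

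For the converse, I would observe that the three complex equations given by the vanishing components are (after dividing the second one by $i$) $-z_i A + (1-z_i^2) B = 0$, $z_i A + (1+z_i^2) B = 0$ and $A + 2 z_i B = 0$; adding the first two gives $2B = 0$, and then any of the remaining equations gives $A = 0$. Thus closedness of $\eta$ at each interior vertex forces both defining conditions of a discrete holomorphic quadratic differential.

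Finally, since $M$ is simply connected, a closed dual 1-form on $M^*$ is exact: integrating $\eta$ along dual edge paths from a basepoint in $V^*$ produces a well-defined primitive $\mathcal{F}:V^* \to \mathbb{C}^3$ with $d\mathcal{F} = \eta$, completing both directions. The only mildly subtle point is keeping track of conventions for dual edge orientation (right face to left face), so that the sign in $d\mathcal{F}(e^*_{ij})$ matches the one in the statement; beyond this bookkeeping, the proof is a direct three-line algebraic identity per component.
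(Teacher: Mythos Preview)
Your proof is correct and follows essentially the same approach as the paper: both reduce the closedness of $\eta$ at an interior vertex to the vanishing of $A=\sum_j q_{ij}$ and $B=\sum_j q_{ij}/(z_j-z_i)$ via the same kind of algebraic decomposition (the paper does it once, for the $z_iz_j$ term, and appeals to the definition for the rest; you do it componentwise). Your converse argument is more explicit than the paper's ``can be shown directly,'' but the content is identical.
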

\begin{proof}
	Given a discrete holomorphic quadratic differential $q$, we have 
	\[
	\sum_j q_{ij} z_i z_j /(z_j -z_i) = z_i \sum_j q_{ij} + z^2_i \sum_j q_{ij} /(z_j -z_i) =0.
	\]
	Together with the definition of discrete holomorphic quadratic differentials, it implies that the $\field{C}^3$-valued dual 1-form
	\[
	\eta(e_{ij}^*) := \frac{q_{ij}}{z_j -z_i} \left( \begin{array}{c}
	1-z_i z_j \\ i(1+z_i z_j) \\ z_i + z_j
	\end{array}\right)
	\]
	satisfies
	\[
	\sum_j \eta(e_{ij}^*) = \sum_j \frac{q_{ij}}{z_j -z_i} \left( \begin{array}{c}
	1-z_i z_j \\ i(1+z_i z_j) \\ z_i + z_j
	\end{array}\right) =0
	\]
	and hence is closed.
	Because the discrete surface is simply connected, $\eta$ is exact and there exists $\mathcal{F}$ such that
	\[
	d\mathcal{F}(e^*_{ij}) = \eta(e_{ij}^*).
	\]
	The converse can be shown directly.
\end{proof}

As expected from the smooth theory, the discrete surfaces given by
\[
\Re(e^{i\theta}\mathcal{F}) \quad \forall \,\theta \in [0,2\pi]
\]
should form an associated family of discrete minimal surfaces. Furthermore, $\Re(\mathcal{F})$ is analogous to the asymptotic line parametrization of a minimal surface while $\Re(i\mathcal{F})$ is analogous to the curvature line parametrization.

We denote by $N:V \to \field{S}^2$ the inverse stereographic projection of $z$, i.e.
\begin{equation*}
	N := \frac{1}{1+|z|^2} \left(\begin{array}{c}2\Re z \\ 2\Im z \\ |z|^2-1\end{array}\right).		
\end{equation*} 
We get
\begin{align} \label{eq:ref}
	\Re (d\mathcal{F}(e^*_{ij})) &= \frac{q_{ij}(1+|z_i|^2) (1+|z_j|^2)}{2|z_j-z_i|^2}  (N_j - N_i), \\ \label{eq:imf}
	\Re (i\, d\mathcal{F}(e^*_{ij}))&= \frac{q_{ij}(1+|z_i|^2) (1+|z_j|^2)}{2|z_j-z_i|^2}  (N_i \times N_j)
\end{align}
where $\times$ denotes the cross product in Euclidean space.

On one hand, the realization $\Re (\mathcal{F})$ is a \emph{reciprocal parallel mesh} of $N$, i.e. $\Re (\mathcal{F})$ is a combinatorial dual to $N$ and their corresponding edges are parallel. The realization $\Re (\mathcal{F})$ satisfies the definition of discrete minimal surfaces in \cite{Lam2015} as a Christoffel dual of an inscribed discrete surface. Furthermore, since a face $i^*$ of $\Re (\mathcal{F})$ is planar if and only if the vertex $i$ of $N$ has planar vertex star, we observe that the faces of $\Re (\mathcal{F})$ are non-planar in general.

In contrast, $\Re (i \mathcal{F})$ always has planar faces since for every dual face $i^* \in F^*$
\[
\langle N_i, \Re (i\,d\mathcal{F}(e^*_{ij})) \rangle = 0.
\] 
In particular $N_i$ is the normal of face $i^*$ of $\Re (i \mathcal{F})$.

In the next section, we focus on the discrete surfaces $\Re ( \mathcal{F})$ and $\Re (i \mathcal{F})$. As we will see they unify the earlier approaches to discrete minimal surfaces.

\section{Discrete minimal surfaces and their conjugates} \label{sec:dismin}

\subsection{A-minimal surfaces}

We introduce two types of discrete minimal surfaces. We call the first type as A-minimal surfaces, which was considered in \cite{Lam2015} as the Christoffel duals of the Gauss maps. As illustrated in Example \ref{example:cir}, the edges of A-minimal surfaces are in \emph{asymptotic line directions}. 

\begin{definition}
	Given a discrete surface $M$ and its dual $M^*$, a realization $f:V^* \to \field{R}^3$ of $M^*$ is \emph{A-minimal} with Gauss map $N:V \to \field{S}^2 \subset \mathbb{R}^3$ if $N$ is admissible and for every interior edge $\{ij\}$
	\begin{align}
		dN(e_{ij}) \times df(e^*_{ij}) =0 \label{eq:anet1}, \\
		\langle N_i + N_j, df(e_{ij}^*) \rangle =0 \label{eq:anet2}
	\end{align}
	where $ \times $ denotes the cross product in $\mathbb{R}^3$.
\end{definition}
\begin{remark}
	If $N$ is non-degenerate, i.e. $N_i \neq N_j$, then \eqref{eq:anet1} implies \eqref{eq:anet2}.
\end{remark}
\begin{remark}
	If $f$ is A-minimal and $v \in V^*$ is a vertex of degree three, the image of $v$ and its three neighboring vertices lie on an affine plane. It is closely related to Bobenko and Pinkall's discretization of asymptotic line parametrizations \cite{Bobenko1999}.
\end{remark}
For A-minimal surfaces in the following example, their edges are in \emph{asymptotic line directions}.
\begin{example}[Circular minimal surfaces \cite{Bobenko1996}] \label{example:cir}
	Here we consider the general form of quadrilateral isothermic surfaces, which provides a discretization of principal curvature lines of a smooth surface \cite{Bobenko1996}. Given any four distinct points in space, we identify a sphere containing the four points as a complex plane via stereographic projection and consider the complex cross ratio of the four points. Such a complex number is well defined up to conjugation. Particularly if the cross ratio is real, the four points are concyclic. A map $f: V(\field{Z}^2) \to \field{R}^3$ is a quadrilateral isothermic surface if every elementary quadrilateral is cyclic and has factorized real cross-ratios
	\[
	cr(f_{m,n},f_{m+1,n},f_{m+1,n+1},f_{m,n+1}) = \frac{\alpha_m}{\beta_n} \quad \forall m,n \in \field{Z},
	\]
	where $\alpha_m \in \field{R}$ does not depend on $n$ and $\beta_n \in \field{R}$ not depend on $m$. Then there exists another discrete isothermic net $f^*: \field{Z}^2 \to \field{R}^3$ satisfying  
	\[
	f^*_{m+1,n} - f^*_{m,n} = \alpha_m \frac{f_{m+1,n}-f_{m,n}}{||f_{m+1,n}-f_{m,n}||^2}, \quad
	f^*_{m,n+1} - f^*_{m,n} =\beta_n \frac{f_{m,n+1}-f_{m,n}}{||f_{m,n+1}-f_{m,n}||^2}.
	\]
	In fact every elementary quadrilateral of $f^*$ is cyclic. If $f(\field{Z}) \subset \field{S}^2$, then $f^*$ is called a circular minimal surface.
	
	As in Example \ref{example:cr-1} we consider half the vertices of a quadrilateral isothermic surface. If $f:\field{Z}^2 \to \field{S}^2$ is a quadrilateral isothermic surface, then 
	\[
	f^*|_{\field{Z}^2_w} \text{ is A-minimal with Gauss map } f|_{\field{Z}^2_b}.
	\]
	Here the edges of $f^*|_{\field{Z}^2_w}$ are diagonals of the quadrilaterals of $f$ and hence are regarded as a discretization of asymptotic lines. Furthermore, one can obtain A-minimal surfaces from a circular minimal surface by adding diagonals arbitrarily.  
\end{example}

\subsection{C-minimal surfaces}

We introduce another type of discrete minimal surfaces, C-minimal surfaces, which reflect the property that smooth minimal surfaces have vanishing mean curvature. As illustrated in Example \ref{example:conical}, their edges are in the principal directions, which yields a discrete analogue of \emph{curvature line parametrization}.  

Suppose we have a realization $\tilde{f}:V^* \to \field{R}^3$ of $M^*$ with planar faces. We pick a normal $N:F^* \to \field{S}^2$ for each face such that $N$ is admissible, i.e. $N_i \neq -N_j$ for every edge $\{ij\}$. We then measure its dihedral angles. If $d\tilde{f}(e^*_{ij}) \neq 0$, the sign of the dihedral angle $\alpha_{ij} \in (-\pi,\pi)$ is determined by
\begin{align*}
	\sin \alpha_{ij} &= \langle N_i \times N_j, \frac{d\tilde{f}(e^*_{ij})}{|d\tilde{f}(e^*_{ij})|} \rangle, \\
	\cos  \alpha_{ij} &= \langle N_i,N_j \rangle 
\end{align*}
where $i^*,j^*\in F^*$ denote the left and the right face of $e^*_{ij}$. In the following, we are interested in the quantity $|d\tilde{f}| \tan (\alpha/2)$ defined on edges. If an edge degenerates, though there is an ambiguity in the sign of the dihedral angle, the quantity $|d\tilde{f}| \tan (\alpha/2)$ vanishes.

\begin{definition} \label{def:c-minimal}
	Given a discrete surface $M$ and its dual $M^*$, a realization $\tilde{f}:V^* \to \field{R}^3$ of $M^*$ is \emph{C-minimal} with Gauss map $N:V \to \field{S}^2$ if $\tilde{f}$ has planar faces with face normal $N$ admissible and the \emph{scalar mean curvature} $\tilde{H}: F^*_{int} \to \field{R}$ defined by
	\[
	\tilde{H}_{i} := \sum_j |d\tilde{f}(e^*_{ij})| \tan \frac{\alpha_{ij}}{2} \quad \forall i \in V_{int} \cong F^*_{int}
	\]
	vanishes identically.
\end{definition}

We rewrite the scalar mean curvature in terms of face normals.
\begin{lemma}\label{lem:dotmean}
	Suppose $\tilde{f}:V^* \to \field{R}^3$ is a realization of a discrete surface $M^*$ with face normal $N:V \to \field{R}^3$.
	If $N_i \neq \pm N_j$ on edge $\{ij\}$, then 
	\[
	d\tilde{f}(e_{ij}^*) = k_{ij} N_i \times N_j
	\]
	for some $k_{ij}=k_{ji} \in \field{R}$ and
	\[
	|d\tilde{f}(e^*_{ij})| \tan \frac{\alpha_{ij}}{2} = k_{ij} (1- \langle N_i , N_j \rangle).
	\]
\end{lemma}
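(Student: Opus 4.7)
The plan is to first establish the decomposition $d\tilde{f}(e_{ij}^*) = k_{ij}\, N_i \times N_j$ from planarity, and then to reduce the identity for $|d\tilde{f}|\tan(\alpha/2)$ to a standard half-angle manipulation.

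First, since $i^*$ and $j^*$ are the two planar faces meeting along the edge dual to $e_{ij}^*$, the vector $d\tilde{f}(e_{ij}^*)$ lies in both face planes and is therefore orthogonal to both face normals $N_i$ and $N_j$. Under the hypothesis $N_i \neq \pm N_j$, the vectors $N_i$ and $N_j$ are linearly independent, so $N_i \times N_j$ spans the one-dimensional orthogonal complement; hence $d\tilde{f}(e_{ij}^*) = k_{ij}\,N_i \times N_j$ for a unique real scalar $k_{ij}$. Symmetry $k_{ij}=k_{ji}$ follows from $d\tilde{f}(e_{ji}^*)=-d\tilde{f}(e_{ij}^*)$ combined with $N_j\times N_i = -N_i\times N_j$.

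For the second identity, I would let $\theta \in [0,\pi]$ be determined by $\cos\theta = \langle N_i, N_j\rangle$, so $|N_i \times N_j| = \sin\theta$ and $|d\tilde{f}(e_{ij}^*)| = |k_{ij}|\sin\theta$. Substituting the formula from the first part into the definition of $\sin\alpha_{ij}$ gives
\[
\sin\alpha_{ij} = \Big\langle N_i\times N_j,\ \frac{k_{ij}\,N_i\times N_j}{|k_{ij}|\,\sin\theta}\Big\rangle = \sign(k_{ij})\,\sin\theta,
\]
while $\cos\alpha_{ij} = \cos\theta$ by definition. Since $\alpha_{ij}\in(-\pi,\pi)$, this forces $\alpha_{ij} = \sign(k_{ij})\,\theta$, and consequently $\tan(\alpha_{ij}/2) = \sign(k_{ij})\,\tan(\theta/2)$.

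Combining these, $|d\tilde{f}(e_{ij}^*)|\tan(\alpha_{ij}/2) = k_{ij}\,\sin\theta\,\tan(\theta/2)$, and the half-angle identity $\sin\theta\,\tan(\theta/2) = 2\sin^2(\theta/2) = 1-\cos\theta$ yields the desired equality $k_{ij}(1-\langle N_i,N_j\rangle)$. The only subtlety, and the step I would be most careful with, is correctly tracking the sign of $k_{ij}$ through the definition of $\alpha_{ij}$; the half-angle manipulation itself is routine.
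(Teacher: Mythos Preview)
Your argument is correct and follows essentially the same route as the paper: derive $d\tilde{f}(e_{ij}^*)=k_{ij}\,N_i\times N_j$ from orthogonality to both normals, read off $\sign(k_{ij})$ from the defining formula for $\sin\alpha_{ij}$, and finish with the half-angle identity $\sin\theta\tan(\theta/2)=1-\cos\theta$. The only minor difference is that the paper separates the degenerate case $d\tilde{f}(e_{ij}^*)=0$ (equivalently $k_{ij}=0$) before dividing by $|d\tilde{f}(e_{ij}^*)|$, whereas you implicitly assume $k_{ij}\neq 0$ when forming the quotient; you should add that one-line remark.
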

\begin{proof}
	Suppose $\{ij\}$ is an edge of $M$ with $N_i \neq \pm N_j$. Since $N_i,N_j \perp d\tilde{f}(e^*_{ij})$ there exists $k_{ij} \in \field{R}$ such that
	\[
	d\tilde{f}(e^*_{ij}) = k_{ij} N_i \times N_j.
	\]
	The property $d\tilde{f}(e^*_{ji}) = -d\tilde{f}(e^*_{ij})$ implies $k_{ij}=k_{ji}$.
	If $d\tilde{f}(e_{ij}^*)=0$, then $k_{ij}=0$ and
	\[
	|d\tilde{f}(e^*_{ij})| \tan \frac{\alpha_{ij}}{2} =0= k_{ij} (1- \langle N_i , N_j \rangle).
	\]
	If $d\tilde{f}(e_{ij}^*)\neq 0$, the dihedral angle $\alpha_{ij}$ satisfies
	\[
	\sin \alpha_{ij} = \langle N_i \times N_j, \frac{d\tilde{f}(e^*_{ij})}{|d\tilde{f}(e^*_{ij})|} \rangle = \sign(k_{ij}) |\sin \alpha_{ij}| 
	\]
	and hence
	\begin{align*}
		k_{ij} (1- \langle N_i, N_j \rangle) = \frac{|k_{ij} N_i \times N_j|}{\sin \alpha_{ij}} \cdot 2\sin^2 \frac{\alpha_{ij}}{2} = |d\tilde{f}(e^*_{ij})| \tan \frac{\alpha_{ij}}{2}.
	\end{align*} 
\end{proof}

In the following, we see that conical minimal surfaces, as a discrete analogue of curvature line parametrizations \cite{Bobenko2007,Pottmann2008}, are C-minimal surfaces.
\begin{example}[Conical minimal surfaces] \label{example:conical}
	A realization $\tilde{f}:V^* \to \field{R}^3$ of a discrete surface $M$ with planar faces and non-degenerate face normal $N:V \to \field{S}^2$ is a \emph{conical} surface if $N$ has planar faces. In this case, the poles of the faces of $N$ with respect to the unit sphere yield a realization $\hat{N}: V^* \to \field{R}^3$ of $M^*$ with planar faces tangent to the unit sphere: for each dual face $i^* = (\phi^*_1,\phi^*_2,\dots,\phi^*_n) \in F^*$
	\[
	\langle N_i, \hat{N}_{\phi_r} \rangle =1.
	\]
	For $t \in \field{R}$ the area of each planar face under a face offset $\tilde{f} + t \hat{N}$ is
	\begin{align*}
		\Area(\tilde{f} + t\hat{N})_i & =  \frac{1}{2} \sum_r \langle (\tilde{f}_{\phi_r} + t \hat{N}_{\phi_r}) \times (\tilde{f}_{\phi_{r+1}} + t \hat{N}_{\phi_{r+1}}), N_i \rangle \\
		&=: \Area(\tilde{f})_i + \Area(\tilde{f},\hat{N})_i t + \Area(\hat{N})_i t^2 .
	\end{align*}
	\citet{Bobenko2010a} considered conical surfaces with vanishing mixed area $\Area(\tilde{f},\hat{N})\equiv 0$ as \emph{conical minimal surfaces} \cite{Bobenko2010a, Muller2015}. Karpenkov and Wallner \cite{Karpenkov2014} showed that mixed area of conical surfaces coincides with scalar mean curvature defined in Definition \ref{def:c-minimal}: We write $\tilde{f}_{\phi_{r+1}}- \tilde{f}_{\phi_r}=k_{ir} N_i \times N_r$ and denote by $\phi_r,\phi_{r+1}$ the right and the left face of $e_{ir}$. We have
	\[
	\langle \hat{N}_{\phi_r}, N_i \rangle = \langle \hat{N}_{\phi_{r+1}}, N_i \rangle = \langle \hat{N}_{\phi_r}, N_r \rangle = \langle \hat{N}_{\phi_{r+1}}, N_r \rangle = 1.
	\]
	Then for every interior vertex $i$
	\begin{align*}
		\Area(\tilde{f},\hat{N})_i &= \frac{1}{2} \sum_r \langle \tilde{f}_{\phi_r} \times \hat{N}_{\phi_{r+1}} + \hat{N}_{\phi_r} \times \tilde{f}_{\phi_{r+1}}, N_i \rangle \\
		&= \frac{1}{2} \sum_r \langle (\hat{N}_{\phi_r} + \hat{N}_{\phi_{r+1}}) \times (\tilde{f}_{\phi_{r+1}}- \tilde{f}_{\phi_r}), N_i \rangle \\
		&= \frac{1}{2} \sum_r \langle \hat{N}_{\phi_r} + \hat{N}_{\phi_{r+1}},   k_{ir} (N_r - \langle N_i, N_r \rangle N_i )\rangle \\
		&= \sum_r k_{ir} (1 - \langle N_i, N_r \rangle) \\
		&= \sum_r |d\tilde{f}(e^*_{ir})| \tan \frac{\alpha_{ir}}{2}.
	\end{align*}
	Hence, conical minimal surfaces are C-minimal.  
\end{example}

\subsection{Conjugate minimal surfaces}
We show that there is a one-to-one correspondence between A-minimal surfaces and C-minimal surfaces (Figure \ref{fig:conjugate}).
\begin{figure}[h]
	\centering
	\includegraphics[width=0.9\textwidth]{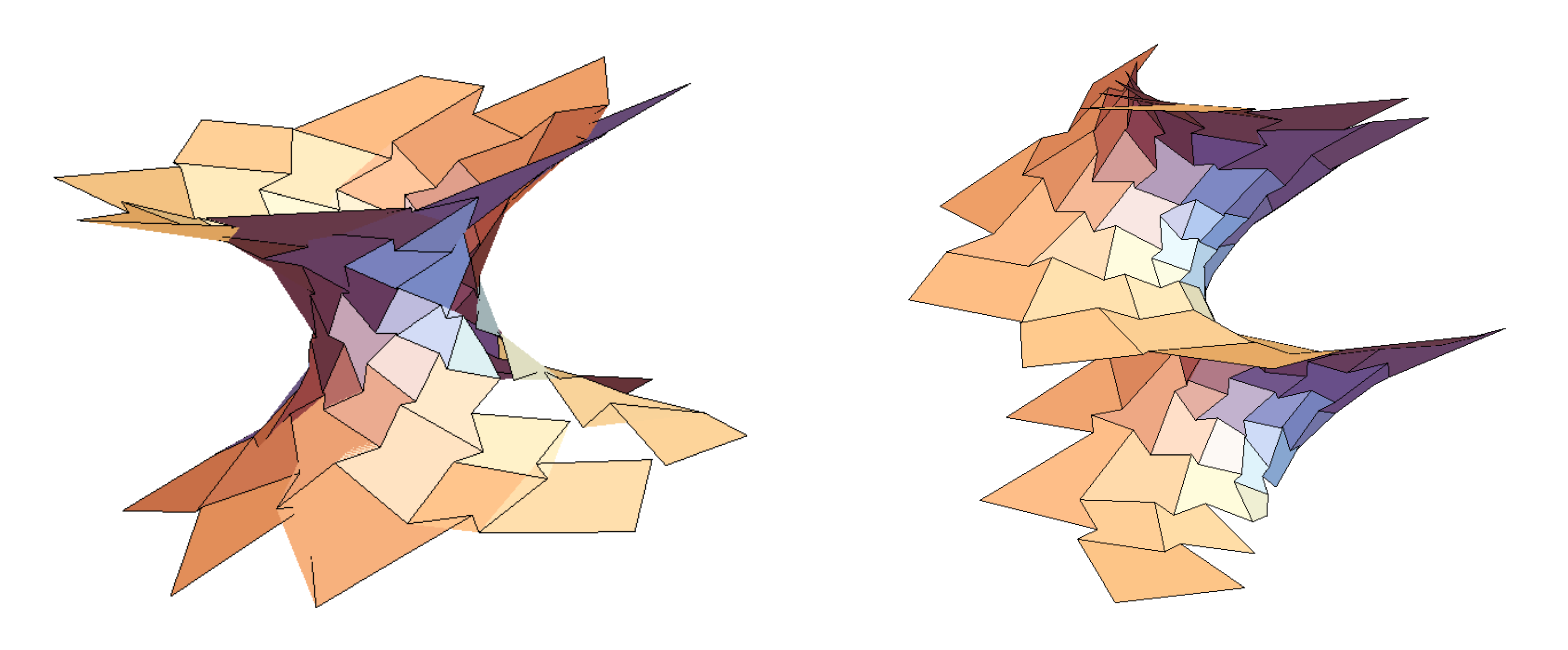}
	\caption{A conjugate pair of discrete minimal surfaces. Left: An A-minimal surface, trivalent with planar vertex stars. It is a reciprocal parallel mesh of a triangulated surface inscribed in the unit sphere. Right: An C-minimal surface, trivalent with planar faces. On each face $\phi$, the scalar mean curvature $\sum_{e \in \phi} \ell_e \tan \frac{\alpha_e}{2}$ vanishes. Here $\ell$ denotes the edge lengths and $\alpha$ denotes the dihedral angles.}
	\label{fig:conjugate}
\end{figure}

\begin{theorem} \label{thm:conjugate}
	Let $M$ be a simply connected discrete surface and $M^*$ be its dual. For every admissible realization $N:V \to \field{S}^2$ of $M$, each A-minimal surface $f:V^* \to \field{R}^3$ with Gauss map $N$ yields a C-minimal surface $\tilde{f}:V^* \to \field{R}^3$ with Gauss map $N$ via
	\[
	d\tilde{f}(e_{ij}^*)= N_i \times df(e_{ij}^*) = N_j \times df(e_{ij}^*)
	\]
	and vice versa. We say $(f,\tilde{f})$ form a conjugate pair of minimal surfaces. 
\end{theorem}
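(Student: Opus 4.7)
The plan is to construct the correspondence $f \leftrightarrow \tilde f$ edge by edge via the candidate dual $1$-form $\eta(e_{ij}^*) := N_i \times df(e_{ij}^*)$, show it is closed so that it integrates to a well-defined realization, and then verify C-minimality and its converse. I would split the argument into four steps: well-definedness and face planarity, vanishing of the scalar mean curvature, the converse direction, and the handling of degenerate edges.

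Starting from an A-minimal $f$ with Gauss map $N$, the identity $N_i \times df(e_{ij}^*) = N_j \times df(e_{ij}^*)$ is nothing but the rewriting $(N_i - N_j) \times df(e_{ij}^*) = 0$ of \eqref{eq:anet1}, which makes $\eta$ well-defined on edges, and antisymmetry under edge reversal is inherited from $df$. Closedness at each interior face $i^* \in F^*_{int}$ reads $\sum_j \eta(e_{ij}^*) = N_i \times \sum_j df(e_{ij}^*) = 0$, using closedness of $df$. Since $M$ (and hence $M^*$) is simply connected, $\eta$ is exact and integrates to a realization $\tilde{f}:V^* \to \field{R}^3$. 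Every face $i^*$ of $\tilde{f}$ is automatically planar with normal $N_i$ because $d\tilde{f}(e_{ij}^*) \perp N_i$ by construction, and admissibility of $N$ is preserved.

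The heart of the calculation is C-minimality. Conditions \eqref{eq:anet1} and \eqref{eq:anet2} force $df(e_{ij}^*)$ to be parallel to $N_j - N_i$ and orthogonal to $N_i + N_j$, so on each edge with $N_i \neq N_j$ one may write $df(e_{ij}^*) = c_{ij}(N_j - N_i)$ for a scalar $c_{ij} = c_{ji}$. Then $d\tilde{f}(e_{ij}^*) = c_{ij}(N_i \times N_j)$, the coefficient $k_{ij}$ of Lemma \ref{lem:dotmean} equals $c_{ij}$, and
\[
\tilde{H}_i = \sum_j c_{ij}(1 - \langle N_i, N_j \rangle).
\]
The closedness $\sum_j c_{ij}(N_j - N_i) = 0$ of $df$ at $i^*$, paired with $N_i$, yields exactly $\tilde{H}_i = 0$, so $\tilde{f}$ is C-minimal.

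For the converse, starting from a C-minimal $\tilde{f}$ I would use Lemma \ref{lem:dotmean} to extract $k_{ij}$ from $d\tilde{f}(e_{ij}^*) = k_{ij}(N_i \times N_j)$ and set $df(e_{ij}^*) := k_{ij}(N_j - N_i)$. The A-minimal conditions are immediate from $(N_j - N_i) \times (N_j - N_i) = 0$ and $\langle N_i + N_j, N_j - N_i\rangle = 0$. For closedness of $df$, the identity $N_i \times \sum_j k_{ij} N_j = 0$ coming from closedness of $d\tilde{f}$ shows $\sum_j k_{ij} N_j = \mu_i N_i$ with $\mu_i = \sum_j k_{ij}\langle N_i,N_j\rangle$, and combining with C-minimality $\sum_j k_{ij}(1 - \langle N_i, N_j \rangle) = 0$ forces $\sum_j k_{ij}(N_j - N_i) = 0$, so $f$ exists by simply-connectedness. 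I expect the main obstacle to lie in the degenerate edges where $N_i = N_j$ (admissibility only excludes $N_i = -N_j$); there $k_{ij}$ is not uniquely determined and $N_j - N_i$ carries no information. In that case I would define $df(e_{ij}^*) := d\tilde{f}(e_{ij}^*) \times N_i$ directly, noting that $d\tilde{f}(e_{ij}^*) \in N_i^\perp$ since it bounds a planar face with normal $N_i = N_j$; the identity $N_i \times (d\tilde{f}(e_{ij}^*) \times N_i) = d\tilde{f}(e_{ij}^*)$ recovers the defining relation, and \eqref{eq:anet2} holds automatically. After this patch the two constructions are inverse to each other edge by edge, yielding the conjugate-pair bijection.
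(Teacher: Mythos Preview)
Your proposal is correct and follows essentially the same route as the paper. The forward direction is identical: define $\eta(e_{ij}^*) = N_i \times df(e_{ij}^*)$, use \eqref{eq:anet1} for well-definedness, closedness of $df$ for closedness of $\eta$, and then pair $\sum_j df(e_{ij}^*)=0$ with $N_i$ to obtain $\tilde H_i=0$ via Lemma~\ref{lem:dotmean}.

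The one place where the paper organises things more cleanly is the converse. Instead of splitting into the cases $N_i\neq N_j$ and $N_i=N_j$ and defining $df$ separately on each, the paper writes a single formula
\[
\omega(e_{ij}^*) := -\,N_i \times d\tilde f(e_{ij}^*)\;-\;|d\tilde f(e_{ij}^*)|\tan\tfrac{\alpha_{ij}}{2}\,N_i,
\]
which on non-degenerate edges reduces to $k_{ij}(N_j-N_i)$ and on degenerate edges ($\alpha_{ij}=0$) reduces to your patch $d\tilde f(e_{ij}^*)\times N_i$. The advantage is that closedness is then a one-liner: the first term sums to $-N_i\times\sum_j d\tilde f(e_{ij}^*)=0$ and the second to $-\tilde H_i\,N_i=0$. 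In your write-up, the closedness argument ``$N_i\times\sum_j k_{ij}N_j=0$'' tacitly assumes every edge around $i$ is non-degenerate; when some neighbours satisfy $N_j=N_i$ those edges still contribute a possibly non-zero $d\tilde f(e_{ij}^*)$ to the closedness of $d\tilde f$, and you have to carry them through. The fix is exactly the computation above (or equivalently: apply $-N_i\times(\cdot)$ to $\sum_j d\tilde f(e_{ij}^*)=0$ and then add $\tilde H_i N_i=0$), so this is a cosmetic gap rather than a real one, but your closing sentence that the edge-by-edge inverse relation alone ``yields the conjugate-pair bijection'' is not quite enough---closedness genuinely uses $\tilde H\equiv 0$, not just invertibility of the edge map.
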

\begin{proof}
	Suppose $f:V^* \to \field{R}^3$ is A-minimal with Gauss map $N:V \to \field{S}^2$. Then there exists a well defined dual 1-form $\eta: \vec{E}^*_{int} \to \field{R}^3$ on $M^*$ such that
	\begin{gather} \label{eq:etaaa}
		\eta(e^*_{ij}) := N_j \times df(e^*_{ij}) = N_i \times df(e^*_{ij}) 
	\end{gather}
	and for every interior vertex $i$
	\begin{gather}
		\sum_j \eta(e^*_{ij}) = N_i \times \sum_j df(e^*_{ij}) =0, \label{eq:conjclosed}\\
		\langle N_i, \sum_j df(e^*_{ij}) \rangle =0 \label{eq:zeromeancurv}.
	\end{gather}
	Since $M$ is simply connected, the closedness condition in \eqref{eq:conjclosed} implies $\eta$ is exact. Hence there exists $\tilde{f}: V^* \to \field{R}^3$ such that for every interior oriented edge $e_{ij}^*\in \vec{E}_{int}$ 
	\[
	d\tilde{f}(e_{ij}^*) = \eta(e_{ij}^*).
	\]
	We now show that $\tilde{f}$ is C-minimal with Gauss map $N$. 
	
	Firstly, equation \eqref{eq:etaaa} implies $\tilde{f}$ has planar faces with face normal $N$.
	
	Secondly, if $N_i = N_j$ for some interior edge $\{ij\}$ then
	\[
	\langle N_i, df(e^*_{ij}) \rangle = \frac{1}{2}\langle N_i+N_j, df(e^*_{ij}) \rangle = 0.
	\]
	If $N_i \neq N_j$, we write $df(e^*_{ij}) = k_{ij} (N_j -N_i)$ for some $k_{ij}=k_{ji} \in \field{R}$ and thus $d\tilde{f}(e^*_{ij}) = k_{ij} N_i \times N_j$. Lemma \ref{lem:dotmean} implies
	\begin{align*}
		\langle N_i, \sum_j df(e^*_{ij}) \rangle &= \sum_{j|N_i\neq N_j} k_{ij} (\langle N_i,N_j \rangle -1)\\ &= -\sum_{j|N_i\neq N_j} |d\tilde{f}(e^*_{ij})| \tan \frac{\alpha_{ij}}{2}\\ &= -\sum_{j} |d\tilde{f}(e^*_{ij})| \tan \frac{\alpha_{ij}}{2}.
	\end{align*}
	Hence \eqref{eq:zeromeancurv} implies that $\tilde{f}$ has vanishing scalar mean curvature and thus is C-minimal with Gauss map $N$.
	
	Conversely, we suppose $\tilde{f}$ is C-minimal with Gauss map $N$ and define a dual 1-form $\omega: \vec{E}_{int}^*\to \field{R}^3$ by
	\[
	\omega(e^*_{ij}) := -N_i \times d\tilde{f}(e_{ij}^*) -  |d\tilde{f}(e^*_{ij})| \tan \frac{\alpha_{ij}}{2}\,N_i .
	\] To check $\omega$ is a 1-form on $M^*$, note that if $N_i =N_j$ then $\alpha_{ij}=0$ and
	\[
	\omega(e^*_{ji}) = -N_j \times d\tilde{f}(e_{ji}^*) = N_i \times d\tilde{f}(e_{ij}^*) = -\omega(e^*_{ij}).
	\]
	If $N_i \neq N_j$, writing $d\tilde{f}(e_{ij}^*) = k_{ij} N_i \times N_j$ yields
	\[
	\omega(e^*_{ij}) = -N_i \times (k_{ij} N_i \times N_j) -k_{ij} (1-\langle N_i,N_j \rangle)N_i  = k_{ij} (N_j - N_i)
	\]
	and hence $\omega(e^*_{ji}) = -\omega(e^*_{ij})$.
	
	In addition $\tilde{f}$ being C-minimal implies that $\omega$ is a closed dual 1-form on $M^*$
	\[
	\sum_j \omega(e^*_{ij}) =0.
	\]
	Since $M^*$ is simply connected, there exists $f:V^* \to \field{R}^3$ such that
	\[
	df(e^*_{ij}) = \omega(e^*_{ij}) \quad \forall \{ij\} \in E_{int}
	\]
	and $f$ is A-minimal with Gauss map $N$.
\end{proof}

\begin{example}[Cubic polyhedra] A cubic polyhedron is a polyhedral surface with edges exactly the same as those of the cubic lattice. Goodman-Strauss and Sullivan \cite{Strauss2003} showed that they are discrete minimal surfaces in the sense of the variational approach \cite{Pinkall1993}, which provides a discretization of triply periodic minimal surfaces. A discretization of Schwarz CLP surface and Schwarz P surface are shown in Figure \ref{fig:cubic}. Since a cubic polyhedron has convex faces, there exists a canonical choice of normal vectors determined by the orientation. Its edge lengths are all equal and its dihedral angles are either $\frac{\pi}{2}$, $0$ or $-\frac{\pi}{2}$. One can check that they are C-minimal. Following from Theorem \ref{thm:conjugate} we know that they have conjugate surfaces. In particular, the A-minimal surface conjugate to the discrete Schwarz P surface (Figure \ref{fig:cubic} right) consists of edges that are exactly asymptotic lines of the smooth Schwarz D surface.  
	\begin{figure}[h]
		\centering
		\includegraphics[width=0.9\textwidth]{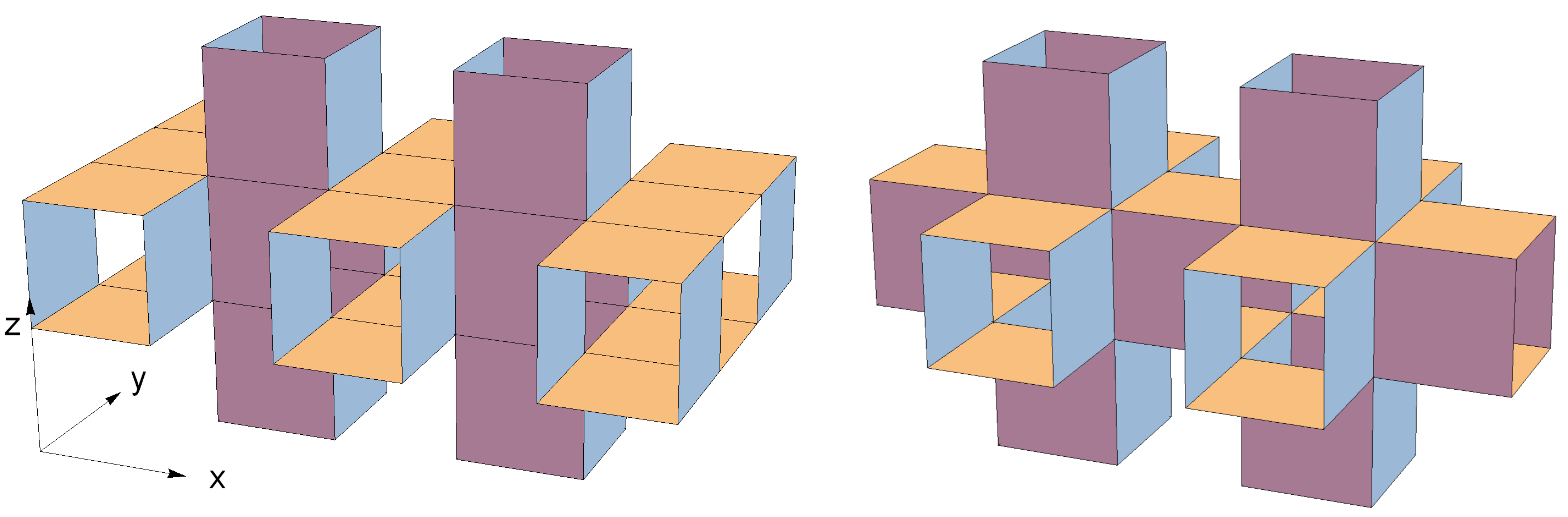}
		\caption{Parts of two C-minimal symmetric cubic polyhedra. Left: Merging two successive squares parallel to xy-plane and xz-plane respectively into rectangles yields vanishing scalar mean curvature. Right: Each square face has vanishing scalar mean curvature.}
		\label{fig:cubic}
	\end{figure} 
\end{example}

Since we have a notion of conjugate minimal surfaces sharing the same combinatorics, we can consider the associated family of minimal surfaces as in the smooth theory. As we will see in Section \ref{sec:criticalarea}, they share certain common properties.

\begin{definition}\label{def:assoicatedfamily}
	For every conjugate pair of minimal surfaces $(f,\tilde{f})$ we define its \emph{associated family of surfaces} as a $S^1$-family of realizations $f^{\theta}:V^* \to \field{R}^3$ 
	\[
	f^{\theta}:= \cos \theta f + \sin \theta \tilde{f}
	\]
	for $\theta \in [0,2\pi]$.
\end{definition}

\subsection{Scalar mean curvature on faces}

Mean curvature was introduced in \cite{Schief2004,Schief2007,Bobenko2010a} on polyhedral surfaces with vertex normals by means of Steiner's formula and further extended to the associated family in \cite{Hoffmann2014}.

Recall that C-minimal surfaces are defined by vanishing scalar mean curvature. In fact, the notion of scalar mean curvature can be extended naturally to its associated family of discrete surfaces. 

\begin{proposition}
	Every A-minimal surface $f$ with an admissible Gauss map $N:V \to \field{S}^2$ satisfies
	\begin{align*}
		\langle dN(e_{ij}) \times df(e^*_{ij}), \frac{N_i+N_j}{|N_i+N_j|^2} \rangle =0  \quad  &\forall \{ij\} \in E_{int}, \\
		\sum_j \langle dN(e_{ij}), df(e^*_{ij}) \rangle =0  \quad &\forall i\in V_{int}.
	\end{align*}
	On the other hand, every C-minimal surface  $\tilde{f}: V^* \to \field{R}^3$ with Gauss map $N$ satisfies
	\begin{align*}
		\sum_j  \langle dN(e_{ij}) \times d\tilde{f}(e^*_{ij}), \frac{N_i+N_j}{|N_i+N_j|^2} \rangle =0  \quad &\forall i\in V_{int},  \\
		\langle dN(e_{ij}), d\tilde{f}(e^*_{ij}) \rangle =0  \quad &\forall \{ij\} \in E_{int}.
	\end{align*}
\end{proposition}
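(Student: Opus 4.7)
The statement splits into four independent identities: two for the A-minimal surface $f$ (a per-edge identity and a vertex-summed identity), and two for the C-minimal surface $\tilde{f}$ (a vertex-summed identity and a per-edge identity). The plan is to dispatch the two per-edge identities immediately from the definitions and then reduce the two summed identities to exactness of $df$ and to $\tilde{H}_i = 0$ respectively. Admissibility of $N$ ensures $N_i + N_j \neq 0$ wherever that denominator appears, so every expression is well-defined.

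The A-minimal per-edge identity is immediate, since $dN(e_{ij}) \times df(e_{ij}^*)$ is already zero by \eqref{eq:anet1}. The C-minimal per-edge identity is equally direct: because $d\tilde{f}(e_{ij}^*)$ is the common edge of the two planar faces of $\tilde{f}$ whose normals are $N_i$ and $N_j$, it is perpendicular to both, and hence to $dN(e_{ij}) = N_j - N_i$.

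For the A-minimal vertex sum, I would exploit that $df$ is exact on $M^*$, so $\sum_j df(e_{ij}^*) = 0$ at every interior vertex. Taking the inner product with $N_i$ and splitting $2N_i = (N_i + N_j) + (N_i - N_j)$ inside each summand, the $(N_i + N_j)$ contribution vanishes by \eqref{eq:anet2}, while the $(N_i - N_j)$ contribution is $-\sum_j \langle dN(e_{ij}), df(e_{ij}^*)\rangle$, which therefore must vanish.

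The C-minimal vertex sum is the one nontrivial computation. Using Lemma \ref{lem:dotmean}, I write $d\tilde{f}(e_{ij}^*) = k_{ij}\, N_i \times N_j$ on each edge with $N_i \neq \pm N_j$. The vector triple product $a \times (b \times c) = (a\cdot c)b - (a\cdot b)c$ then yields
\[
dN(e_{ij}) \times d\tilde{f}(e_{ij}^*) = (N_j - N_i) \times k_{ij}(N_i \times N_j) = k_{ij}(1 - \langle N_i, N_j\rangle)(N_i + N_j),
\]
so pairing against $(N_i + N_j)/|N_i + N_j|^2$ returns exactly the edge contribution $k_{ij}(1 - \langle N_i, N_j\rangle) = |d\tilde{f}(e_{ij}^*)| \tan(\alpha_{ij}/2)$ to the scalar mean curvature. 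Summing and invoking $\tilde{H}_i = 0$ finishes the argument. Edges with $N_i = N_j$ contribute zero on both sides (the cross product vanishes, as does $\tan 0$), and edges with $N_i = -N_j$ are ruled out by admissibility. The only real obstacle is this bookkeeping around degenerate normals and making sure the sign conventions inherited from Lemma \ref{lem:dotmean} carry through cleanly; once that is pinned down the algebra is routine.
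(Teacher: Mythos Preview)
Your proposal is correct and follows essentially the same route as the paper's proof: the two per-edge identities are dispatched directly from \eqref{eq:anet1} and from $N_i,N_j \perp d\tilde{f}(e_{ij}^*)$, the A-minimal vertex sum is reduced to $\langle N_i,\sum_j df(e_{ij}^*)\rangle=0$ via \eqref{eq:anet2}, and the C-minimal vertex sum is identified termwise with the scalar mean curvature using $d\tilde{f}(e_{ij}^*)=k_{ij}\,N_i\times N_j$ and Lemma~\ref{lem:dotmean}. Your splitting $2N_i=(N_i+N_j)+(N_i-N_j)$ in the A-minimal case is a slightly slicker packaging than the paper's explicit $df(e_{ij}^*)=k_{ij}(N_j-N_i)$ with a separate check for $N_i=N_j$, but the substance is identical.
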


\begin{proof}
	Since $f$ is A-minimal with Gauss map $N$, on every interior edge $\{ij\}$ we have
	\[
	dN(e_{ij}) \times df(e^*_{ij}) = 0
	\]
	and hence
	\[
	\langle dN(e_{ij}) \times df(e^*_{ij}), \frac{N_i+N_j}{|N_i+N_j|^2} \rangle =0.
	\]
	We consider an interior vertex $i$ and for each of its neighboring vertex $j$ we write $df(e_{ij}^*) = k_{ij}( N_j-N_i) $ whenever $N_i \neq N_j$. Then the dual face $i^*$ being a closed polygon 
	\[
	\sum_j df(e^*_{ij}) =0
	\]
	implies
	\begin{align*}
		\sum_j \langle dN(e_{ij}), df(e^*_{ij}) \rangle =2 \sum_{j|N_i \neq N_j} k_{ij} \langle N_i - N_j , N_i \rangle= -2\sum_j \langle df(e^*_{ij}), N_i \rangle =0
	\end{align*}
	where we used $\langle df(e^*_{ij}), N_i \rangle =0$ if $N_i = N_j$. 
	
	On the other hand we know
	\[
	N_i, N_j \perp d\tilde{f}(e_{ij}^*)
	\]
	and hence
	\[
	\langle dN(e_{ij}), d\tilde{f}(e^*_{ij}) \rangle =0.
	\]
	For every edge $\{ij\}$ such that $N_i \neq N_j$, we have $d\tilde{f}(e_{ij}^*) = N_i \times df(e_{ij}^*) = k_{ij} N_i \times N_j$. Hence
	\begin{align*}
		\sum_j  \langle dN(e_{ij}) \times d\tilde{f}(e^*_{ij}), \frac{N_i+N_j}{|N_i+N_j|^2} \rangle &= \sum_{j|N_i \neq N_j}  k_{ij} \langle N_i-N_j, N_i \rangle \langle N_i + N_j, \frac{N_i+N_j}{|N_i+N_j|^2} \rangle \\
		&= \sum_j |d\tilde{f}(e^*_{ij})| \tan \frac{\alpha_{ij}}{2} \\
		&=0.
	\end{align*}
\end{proof}

We can extend the equations in the proposition above to surfaces in the associated family.
\begin{corollary}
	If $(f,\tilde{f})$ form a conjugate pair of minimal surfaces, then the discrete surfaces $f^{\theta}:= \cos \theta f + \sin \theta \tilde{f}$ in the associated family satisfy for any interior vertex $i\in V_{int}$ (i.e. any face of $f^{\theta}$)
	\begin{align}
		H^{\theta}_i:=\sum_j  \langle dN(e_{ij}) \times df^{\theta}(e^*_{ij}), \frac{N_i+N_j}{|N_i+N_j|^2} \rangle &=0,  \label{eq:vanmeancurfac} \\
		\sum_j \langle dN(e_{ij}), df^{\theta}(e^*_{ij}) \rangle & =0 \label{eq:vandotfac}.
	\end{align}
	Here $H^{\theta}:F^*_{int} \to \field{R}$ is called the \emph{scalar mean curvature} of $f^{\theta}$. Furthermore $H^{\pi/2}$ coincides with the scalar mean curvature $\tilde{H}$ of the C-minimal surface $\tilde{f}$. 
\end{corollary}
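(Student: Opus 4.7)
The plan is to exploit the $\field{R}$-linearity of $df^{\theta} = \cos\theta\, df + \sin\theta\, d\tilde{f}$ to reduce both identities immediately to the four equations supplied by the preceding proposition. The rest amounts to identifying $H^{\pi/2}$ with $\tilde{H}$ via a short vector computation.

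First I would write
\begin{equation*}
H^{\theta}_i = \cos\theta \sum_j \bigl\langle dN(e_{ij}) \times df(e^*_{ij}), \tfrac{N_i+N_j}{|N_i+N_j|^2}\bigr\rangle + \sin\theta \sum_j \bigl\langle dN(e_{ij}) \times d\tilde{f}(e^*_{ij}), \tfrac{N_i+N_j}{|N_i+N_j|^2}\bigr\rangle.
\end{equation*}
The first sum vanishes term by term because $f$ is A-minimal (and hence $dN \times df=0$ on every edge), while the second sum vanishes as a whole vertex sum by the C-minimality statement of the previous proposition. Hence $H^{\theta}_i = 0$. Exactly the same linear split handles the second identity: the $f$-part is the vertex sum $\sum_j \langle dN(e_{ij}), df(e^*_{ij})\rangle$, which the previous proposition asserts vanishes, and the $\tilde{f}$-part vanishes edge by edge since $N_i, N_j \perp d\tilde{f}(e^*_{ij})$.

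For the final claim $H^{\pi/2} = \tilde{H}$ I would set $\theta = \pi/2$ so that $f^{\pi/2} = \tilde{f}$, and reduce pointwise. On an edge with $N_i = N_j$ both summands are zero, so only edges with $N_i \neq N_j$ matter. There, Lemma \ref{lem:dotmean} gives $d\tilde{f}(e^*_{ij}) = k_{ij}\, N_i \times N_j$, and using the BAC-CAB identity
\begin{equation*}
(N_j - N_i) \times (N_i \times N_j) = N_i\bigl(\langle N_j - N_i, N_j\rangle\bigr) - N_j\bigl(\langle N_j - N_i, N_i\rangle\bigr) = (1 - \langle N_i, N_j\rangle)(N_i + N_j),
\end{equation*}
so pairing with $(N_i+N_j)/|N_i+N_j|^2$ yields $k_{ij}(1 - \langle N_i,N_j\rangle)$, which equals $|d\tilde{f}(e^*_{ij})|\tan(\alpha_{ij}/2)$ by Lemma \ref{lem:dotmean}. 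Summing over $j$ gives $H^{\pi/2}_i = \tilde{H}_i$, as desired.

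I expect essentially no obstacle here: the only subtlety is the bookkeeping around degenerate edges $N_i = N_j$, where the tangent of the dihedral angle is ambiguous in sign but the product $|d\tilde{f}|\tan(\alpha/2)$ and the integrand on the left both vanish, so no case distinction survives the sum.
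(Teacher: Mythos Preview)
Your proposal is correct and matches the paper's intent: the corollary is stated without proof precisely because it follows by linearity of $df^{\theta}$ in $\theta$ from the four identities of the preceding proposition, exactly as you decompose it. Your identification $H^{\pi/2}=\tilde{H}$ via the BAC--CAB expansion is also the same computation carried out in the proof of that proposition (where the equality $\sum_j\langle dN\times d\tilde{f},(N_i+N_j)/|N_i+N_j|^2\rangle=\sum_j|d\tilde{f}|\tan(\alpha_{ij}/2)$ is shown edge by edge), so nothing new is needed there either.
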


The following remark explains the smooth counterpart of \eqref{eq:vanmeancurfac} and \eqref{eq:vandotfac}.

\begin{remark}\label{rmk:sumsmooth}
	Suppose $f:M \to \field{R}^3$ is a smoothly immersed surface with Gauss map $N$ and $X_1,X_2 \in T_p M$ form an orthonormal basis in principal directions. Then the corresponding principal curvatures $\kappa_1,\kappa_2 \in \field{R}$ satisfy
	\[
	dN_p(X_i) = \kappa_i df_p(X_i). 
	\]
	We denote by $J$ the almost complex structure induced via
	\[
	N\times df(\cdot) = df(J\cdot).
	\] 
	For every $\theta \in \field{R}$ we have
	\begin{align*}
		\langle dN_p(\cos \theta X_1 + \sin \theta X_2) \times df_p(J_p(\cos \theta X_1 + \sin \theta X_2)),N_p \rangle = H + \frac{\kappa_1-\kappa_2}{2} \cos 2\theta, \\
		\langle dN_p(\cos \theta X_1 + \sin \theta X_2), df_p(J_p(\cos \theta X_1 + \sin \theta X_2))\rangle =  \frac{\kappa_1-\kappa_2}{2} \sin 2\theta 
	\end{align*}
	where $H= \frac{\kappa_1+\kappa_2}{2}$ is the mean curvature. Averaging over all directions yields
	\begin{align*}
		\frac{1}{2\pi} \int_0^{2\pi}\langle dN_p(\cos \theta X_1 + \sin \theta X_2) \times df_p(J_p(\cos \theta X_1 + \sin \theta X_2)) ,N_p \rangle d\theta= H, \\
		\frac{1}{2\pi} \int_0^{2\pi}\langle dN_p(\cos \theta X_1 + \sin \theta X_2), df_p(J_p(\cos \theta X_1 + \sin \theta X_2))\rangle d\theta=  0.
	\end{align*}
\end{remark}

\section{Weierstrass representation}\label{sec:Weierstrass}

In this section, we prove the complete version of the Weierstrass representation for discrete minimal surfaces. We show that it leads to Goursat transformations of discrete minimal surfaces. 

\begin{theorem} \label{thm:Weierstrass}
	Suppose $z:V \to \field{R}^2 \cong \field{C}$ is a non-degenerate realization of a simply connected surface and $q:E_{int} \to \field{R}$ is a discrete holomorphic quadratic differential. Then there exists $\mathcal{F}:V^* \to \field{C}^3$ such that for every edge $\{ij\} \in E_{int}$
	\begin{equation} \label{eq:eta}
		d\mathcal{F}(e^*_{ij}) =\frac{q_{ij}}{z_j - z_i} \left( \begin{array}{c}
			1-z_i z_j \\ i(1+z_i z_j) \\ z_i + z_j
		\end{array}\right).
	\end{equation}
	We assume the inverse stereographic projection $N:V \to \field{S}^2$ of $z$ given by
	\begin{equation*}
		N := \frac{1}{1+|z|^2} \left(\begin{array}{c}2\Re z \\ 2\Im z \\ |z|^2-1\end{array}\right)		
	\end{equation*}
	is admissible, i.e. $N_i \neq -N_j$ for $\{ij\} \in E$. We then have
	\begin{enumerate}
		\item $f:=\Re(\mathcal{F}):V^* \to \field{R}^3$ is A-minimal and
		\item $\tilde{f}:=\Re(i\mathcal{F}):V^* \to \field{R}^3$ is C-minimal.
	\end{enumerate}
	The realizations $(f,\tilde{f})$ form a conjugate pair with Gauss map $N$.
	
	The converse also holds: For every conjugate pair of minimal surfaces $f,\tilde{f}$ there exists a holomorphic quadratic differential $q$ on the stereographic projection of their Gauss map such that $\mathcal{F}:= f- i\tilde{f}$ satisfies \eqref{eq:eta}.
\end{theorem}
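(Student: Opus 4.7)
The plan is to build the forward direction directly on the two identities \eqref{eq:ref} and \eqref{eq:imf} derived earlier, which already do most of the geometric work, and then reverse the construction for the converse.

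First, I would invoke Lemma \ref{lem:weierstrass} to obtain $\mathcal{F}$, since $M$ is simply connected and $q$ is a discrete holomorphic quadratic differential. For (1), equation \eqref{eq:ref} gives
\[
df(e^*_{ij}) = c_{ij}(N_j - N_i), \qquad c_{ij} := \frac{q_{ij}(1+|z_i|^2)(1+|z_j|^2)}{2|z_j-z_i|^2},
\]
so $dN(e_{ij}) \times df(e^*_{ij}) = c_{ij}(N_j-N_i)\times(N_j-N_i) = 0$ and $\langle N_i+N_j, df(e^*_{ij})\rangle = c_{ij}(|N_j|^2 - |N_i|^2) = 0$, which are precisely conditions \eqref{eq:anet1} and \eqref{eq:anet2} for A-minimality with Gauss map $N$. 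For (2), equation \eqref{eq:imf} gives $d\tilde{f}(e^*_{ij}) = c_{ij}(N_i \times N_j)$, which is orthogonal to both $N_i$ and $N_j$, so every edge of the dual face $i^*$ is orthogonal to $N_i$; thus $\tilde{f}$ has planar faces with face normal $N$ (admissibility is given).

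To show $(f,\tilde{f})$ form a conjugate pair, I would verify the conditions of Theorem \ref{thm:conjugate}: using $N_i \times (N_j - N_i) = N_i \times N_j$ and $N_j \times (N_j - N_i) = N_i \times N_j$,
\[
N_i \times df(e^*_{ij}) = c_{ij}(N_i\times N_j) = d\tilde{f}(e^*_{ij}) = c_{ij}(N_i\times N_j) = N_j \times df(e^*_{ij}).
\]
Then C-minimality of $\tilde{f}$ follows directly from Theorem \ref{thm:conjugate}.

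For the converse, I would start from a conjugate pair $(f,\tilde{f})$ with Gauss map $N$, and let $z$ be its stereographic projection; non-degeneracy of $z$ ensures $N_i \neq N_j$ on every edge, so A-minimality gives $k_{ij} \in \field{R}$ with $df(e^*_{ij}) = k_{ij}(N_j - N_i)$. I then define
\[
q_{ij} := \frac{2|z_j-z_i|^2}{(1+|z_i|^2)(1+|z_j|^2)}\, k_{ij}
\]
and verify that $\mathcal{F} := f - i\tilde{f}$ satisfies \eqref{eq:eta}. Indeed, with this $q$, formulas \eqref{eq:ref} and \eqref{eq:imf} applied to the Weierstrass 1-form $\eta$ on the right-hand side of \eqref{eq:eta} yield $\Re(\eta) = df$ and $\Re(i\eta) = d\tilde{f}$, hence $\eta = \Re(\eta) - i\Re(i\eta) = df - i\, d\tilde{f} = d\mathcal{F}$. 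Since $\mathcal{F}$ is a well-defined function on $V^*$, the 1-form $d\mathcal{F}$ is closed; unpacking $\sum_j d\mathcal{F}(e^*_{ij}) = 0$ componentwise (the first two entries give $\sum_j q_{ij}/(z_j-z_i) = 0$ and the third, after using this, collapses to $\sum_j q_{ij} = 0$) shows $q$ is a discrete holomorphic quadratic differential.

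The main obstacle will be the converse: the forward direction is essentially a corollary of \eqref{eq:ref}, \eqref{eq:imf} and Theorem \ref{thm:conjugate}, but in the converse I must ensure both that the naively defined $q_{ij}$ reproduces the full vector-valued Weierstrass formula (not just its real or imaginary part) and that $q$ is automatically holomorphic — the latter requiring the closedness-to-HQD unpacking sketched above, which is the direction left implicit in the proof of Lemma \ref{lem:weierstrass}.
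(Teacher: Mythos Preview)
Your proposal is correct and follows essentially the same route as the paper's proof: the forward direction is reduced to Lemma~\ref{lem:weierstrass}, equations \eqref{eq:ref}--\eqref{eq:imf}, and Theorem~\ref{thm:conjugate}, and the converse defines $q$ by exactly the formula the paper uses and then checks it is a discrete holomorphic quadratic differential. You supply more detail than the paper (in particular the explicit closedness-to-HQD unpacking, which the paper leaves as ``can be shown directly''), but the strategy is identical.
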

\begin{proof}
	The existence of $\mathcal{F}$ follows from Lemma \ref{lem:weierstrass}. Equation \eqref{eq:ref} shows that $\Re(\mathcal{F})$ is A-minimal. Furthermore, $\Re(i\mathcal{F})$ can be written in the form of Equation \eqref{eq:imf}. Exploiting Theorem \ref{thm:conjugate}, we conclude $\Re(i\mathcal{F})$ is C-minimal.
	
	The converse is straightforward. Given a conjugate pair of discrete minimal surfaces $(f,\tilde{f})$ with Gauss map $N$, we define $q:E_{int} \to \field{R}$ such that
	\[
	df(e^*_{ij}) = \frac{q_{ij}(1+|z_i|^2) (1+|z_j|^2)}{2|z_j-z_i|^2}  (N_j - N_i)
	\]
	where $z$ is the stereographic projection of $N$. Then it can be shown directly that $q$ is a holomorphic quadratic differential.
\end{proof}

\begin{corollary}
	The associated family of discrete surfaces $f^{\theta}$ in Definition \ref{def:assoicatedfamily} satisfies
	\[
	f^{\theta}= \Re(e^{i\theta} \mathcal{F}).
	\]
\end{corollary}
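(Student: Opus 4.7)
The plan is to reduce this to a one-line algebraic identity by first pinning down $\mathcal{F}$ in terms of $f$ and $\tilde f$, and then expanding $e^{i\theta}$. The converse direction of Theorem \ref{thm:Weierstrass} already identifies $\mathcal{F}$ with $f - i\tilde f$, so I would begin by recording this: $\Re(\mathcal{F}) = f$ and $\Re(i\mathcal{F}) = \Re(if + \tilde f) = \tilde f$, which is consistent with the two conclusions (1) and (2) of that theorem.

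Given this, the computation is immediate. Writing $e^{i\theta} = \cos\theta + i\sin\theta$, I would expand
\[
e^{i\theta}\mathcal{F} = (\cos\theta + i\sin\theta)(f - i\tilde f) = \bigl(\cos\theta\, f + \sin\theta\,\tilde f\bigr) + i\bigl(\sin\theta\, f - \cos\theta\,\tilde f\bigr),
\]
and taking real parts yields $\Re(e^{i\theta}\mathcal{F}) = \cos\theta\, f + \sin\theta\,\tilde f$, which by Definition \ref{def:assoicatedfamily} is exactly $f^\theta$.

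The only minor obstacle is the integration constant: $\mathcal{F}$ from Lemma \ref{lem:weierstrass} (equivalently Theorem \ref{thm:Weierstrass}) is only determined up to an additive constant in $\field{C}^3$, and likewise the associated family is naturally defined up to a translation of $\field{R}^3$. So strictly speaking the identity should be read as an equality of dual $1$-forms $dF^\theta = \Re(e^{i\theta}\, d\mathcal{F})$, obtained by applying $d$ to both sides and invoking linearity of $\Re$; once one fixes the integration constant of $\mathcal{F}$ so that $\Re\mathcal{F} = f$ and $\Re(i\mathcal{F}) = \tilde f$ pointwise on $V^*$, the pointwise identity $f^\theta = \Re(e^{i\theta}\mathcal{F})$ follows. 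I would mention this normalization explicitly to keep the statement clean, but the argument itself is a two-line expansion with no further content.
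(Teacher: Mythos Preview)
Your proposal is correct and matches the paper's intent: the corollary is stated without proof precisely because it follows by the one-line expansion $\Re(e^{i\theta}\mathcal{F}) = \cos\theta\,\Re(\mathcal{F}) + \sin\theta\,\Re(i\mathcal{F}) = \cos\theta\,f + \sin\theta\,\tilde{f} = f^\theta$, using the identifications $f = \Re(\mathcal{F})$ and $\tilde{f} = \Re(i\mathcal{F})$ from Theorem~\ref{thm:Weierstrass}. Your remark on the integration constant is accurate but unnecessary here, since in Theorem~\ref{thm:Weierstrass} the surfaces $f$ and $\tilde{f}$ are \emph{defined} as $\Re(\mathcal{F})$ and $\Re(i\mathcal{F})$, so the normalization is already fixed.
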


\subsection{Goursat transformations}
In contrast to Bonnet transformations in the smooth theory, Goursat transformations generate non-isometric minimal surfaces in general \cite{Goursat1887}. A conjugate pair of minimal surfaces can be regarded as a holomorphic null curve in $\field{C}^3$. A Goursat transform of a conjugate pair of minimal surfaces is a complex rotation acting on the null curve. These transformations preserve respectively the curvature line parametrizations and the asymptotic line parametrizations of minimal surfaces \cite{Mladenov2000}.

The M\"{o}bius invariant property of discrete holomorphic quadratic differentials (Proposition \ref{thm:mobius}) implies that if $q:E_{int} \to \field{R}$ is a discrete holomorphic quadratic differential on a non-degenerate realization $z:M \to \field{C}$, then the dual 1-form defined by
\begin{align} \label{eq:etagoursat}
	\eta_{\Phi}(e^*_{ij}) := \frac{q_{ij}}{(\Phi(z_j) -\Phi(z_i))} \left( \begin{array}{c} 1-\Phi(z_i)\Phi(z_j) \\ i (1+\Phi(z_i)\Phi(z_j)) \\ \Phi(z_i)+\Phi(z_j) \end{array} \right)
\end{align}
is closed for any M\"{o}bius transformation $\Phi:\field{C}\cup \{\infty\} \to \field{C}\cup \{\infty\}$. Note every M\"{o}bius transformation $\Phi$ can be represented as
\begin{align}
	\Phi(z) = \frac{az+b}{cz+d}  \label{eq:mobiustransform}
\end{align}
for some $a,b,c,d \in \field{C}$ with $ad-bc=1$. We are going to see how a minimal surface deforms if its Gauss map is transformed under a M\"{o}bius transformation by substituting \eqref{eq:mobiustransform} into \eqref{eq:etagoursat}.

The following can be verified directly.
\begin{lemma} \label{lem:goursat}
	Let $\Phi(z):= \frac{az+b}{cz+d}$ with $ad-bc=1$. Then
	\begin{align*}
		\frac{1}{\Phi(z_j) -\Phi(z_i)} \left( \begin{array}{c} 1-\Phi(z_i)\Phi(z_j) \\ i (1+\Phi(z_i)\Phi(z_j)) \\ \Phi(z_i)+\Phi(z_j) \end{array} \right) =  \frac{1}{z_j -z_i}A_{\Phi} \left( \begin{array}{c}
			1-z_i z_j \\ i(1+z_i z_j) \\ z_i + z_j
		\end{array}\right)
	\end{align*}
	where
	\[
	A_{\Phi}:=\left( \begin{array}{ccc} \frac{1}{2}(a^2-b^2-c^2+d^2) & \frac{i}{2}(a^2+b^2-c^2-d^2) & -ab+cd \\ \frac{i}{2}(-a^2+b^2-c^2+d^2) & \frac{1}{2}(a^2+b^2+c^2+d^2) & i(ab+cd) \\ -ac+bd & -i(ac+bd) & ad+bc \end{array} \right).
	\]
\end{lemma}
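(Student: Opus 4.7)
The plan is a direct computation, justifying the paper's remark that this ``can be verified directly.'' The crucial preliminary identity is
\[
\Phi(z_j) - \Phi(z_i) \;=\; \frac{(az_j+b)(cz_i+d)-(az_i+b)(cz_j+d)}{(cz_i+d)(cz_j+d)} \;=\; \frac{(ad-bc)(z_j-z_i)}{(cz_i+d)(cz_j+d)} \;=\; \frac{z_j-z_i}{(cz_i+d)(cz_j+d)},
\]
where the last step uses $ad-bc=1$. This explains why the factor $1/(z_j-z_i)$ reappears on the right-hand side: multiplying by $1/(\Phi(z_j)-\Phi(z_i))$ is the same as multiplying by $(cz_i+d)(cz_j+d)/(z_j-z_i)$, and the factor $(cz_i+d)(cz_j+d)$ is precisely what clears the common denominator in each of the three entries $1-\Phi(z_i)\Phi(z_j)$, $1+\Phi(z_i)\Phi(z_j)$, and $\Phi(z_i)+\Phi(z_j)$.

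Next I would expand the three cleared numerators in the symmetric variables $s:=z_iz_j$ and $t:=z_i+z_j$:
\begin{align*}
(cz_i+d)(cz_j+d)-(az_i+b)(az_j+b) &= (c^2-a^2)\,s+(cd-ab)\,t+(d^2-b^2),\\
(cz_i+d)(cz_j+d)+(az_i+b)(az_j+b) &= (c^2+a^2)\,s+(cd+ab)\,t+(d^2+b^2),\\
(az_i+b)(cz_j+d)+(az_j+b)(cz_i+d) &= 2ac\,s+(ad+bc)\,t+2bd.
\end{align*}
The middle identity still has to be multiplied by $i$ to produce the actual second entry on the left-hand side. Each resulting polynomial in the basis $\{1,s,t\}$ is then re-expressed in the alternative basis $\bigl(1-s,\; i(1+s),\; t\bigr)$ of the same three-dimensional space, using the easily checked relations $1=\tfrac12(1-s)+(-\tfrac{i}{2})\,i(1+s)$ and $s=-\tfrac12(1-s)+(-\tfrac{i}{2})\,i(1+s)$. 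Collecting coefficients produces exactly the three rows of $A_{\Phi}$ as stated; the only step requiring care is tracking the factors of $i$ in the second row, which combine as $i\cdot(-i/2)=1/2$.

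The obstacle is pure bookkeeping; nothing conceptual is hidden. As a sanity check, the assignment $\Phi\mapsto A_{\Phi}$ is the standard double cover $\mathrm{SL}(2,\field{C})\to \mathrm{SO}(3,\field{C})$ associated with the parametrization $z\mapsto \tfrac12(1-z^2,\,i(1+z^2),\,2z)^{T}$ of the null quadric in $\field{C}^{3}$. This provides a coordinate-free reason why such a matrix must exist, and implies multiplicativity $A_{\Phi_1}A_{\Phi_2}=A_{\Phi_1\circ\Phi_2}$, against which individual entries can be cross-checked using the simple generators $z\mapsto z+b$, $z\mapsto a^2 z$, and $z\mapsto -1/z$ of the M\"obius group.
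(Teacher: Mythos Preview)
Your proposal is correct and is precisely the direct verification the paper alludes to but omits; the paper gives no details beyond the phrase ``can be verified directly.'' Your added remark identifying $\Phi\mapsto A_{\Phi}$ with the double cover $\mathrm{SL}(2,\field{C})\to\mathrm{SO}(3,\field{C})$ is a useful conceptual supplement not present in the paper.
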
 
The following indicates that conjugate pairs of discrete minimal surfaces deform exactly the same way as the smooth ones under Goursat transforms.
\begin{theorem}
	Given a discrete surface $M$ and its dual $M^*$, we suppose $f,\tilde{f}:V^* \to \field{R}^3$ form a conjugate pair of minimal surfaces with a non-degenerate admissible Gauss map $N:V \to \field{S}^2$ where $f$ is A-minimal and $\tilde{f}$ is C-minimal. For any M\"{o}bius transformation
	\[
	\Phi(z):=\frac{az+b}{cz+d}, \quad \text{where } a,b,c,d \in \field{C} \text{ and } ad-bc=1
	\]
	such that $N_{\Phi}:V \to \field{S}^2$ defined by
	\[
	N_{\Phi} := \frac{1}{|\Phi(z)|^2+1}\left(\begin{array}{c}2\Re \Phi(z) \\ 2\Im \Phi(z) \\ |\Phi(z)|^2-1\end{array}\right) \quad \text{where } z:= \frac{N_1}{1-N_3} + i \frac{N_2}{1-N_3},
	\]
	is admissible, we consider
	\[
	A_{\Phi}:=\left( \begin{array}{ccc} \frac{1}{2}(a^2-b^2-c^2+d^2) & \frac{i}{2}(a^2+b^2-c^2-d^2) & -ab+cd \\ \frac{i}{2}(-a^2+b^2-c^2+d^2) & \frac{1}{2}(a^2+b^2+c^2+d^2) & i(ab+cd) \\ -ac+bd & -i(ac+bd) & ad+bc \end{array} \right)
	\]
	and define $f_{\Phi},\tilde{f}_{\Phi}:V^* \to \field{R}^3$ by
	\[
	f_{\Phi}- i\tilde{f}_{\Phi}:= A_{\Phi} (f- i\tilde{f}).
	\]
	Then, $f_{\Phi}$ is A-minimal and $\tilde{f}_{\Phi}$ is C-minimal. They form a conjugate pair of minimal surfaces with Gauss map $N_{\Phi}$.  
\end{theorem}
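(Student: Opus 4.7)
The plan is to reduce the statement to the Weierstrass representation of Theorem \ref{thm:Weierstrass} combined with the M\"obius invariance of discrete holomorphic quadratic differentials (Proposition \ref{thm:mobius}) and the algebraic identity of Lemma \ref{lem:goursat}. The idea is that the whole transformation acts diagonally: $\Phi$ acts on the Gauss map through its stereographic image, while the matrix $A_\Phi$ is precisely what implements that action on the Weierstrass data.

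Concretely, I would proceed as follows. First, apply the converse direction of Theorem \ref{thm:Weierstrass} to the given conjugate pair: setting $\mathcal{F} := f - i\tilde{f}$ and letting $z:V\to\field{C}$ denote the stereographic projection of $N$, there exists a discrete holomorphic quadratic differential $q:E_{int}\to\field{R}$ with respect to $z$ such that
\[
d\mathcal{F}(e^*_{ij}) = \frac{q_{ij}}{z_j - z_i}\left(\begin{array}{c} 1-z_iz_j \\ i(1+z_iz_j) \\ z_i+z_j \end{array}\right).
\]
Second, invoke Proposition \ref{thm:mobius}: since $\Phi$ is a M\"obius transformation that does not send any vertex to infinity (as $N_\Phi$ is assumed admissible, in particular well-defined on $V$), the same function $q$ is a discrete holomorphic quadratic differential for $w := \Phi\circ z$ as well. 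Third, define $\mathcal{F}_\Phi := A_\Phi\mathcal{F}$ and use Lemma \ref{lem:goursat} to compute
\[
d\mathcal{F}_\Phi(e^*_{ij}) = A_\Phi \, d\mathcal{F}(e^*_{ij}) = \frac{q_{ij}}{\Phi(z_j)-\Phi(z_i)}\left(\begin{array}{c} 1-\Phi(z_i)\Phi(z_j) \\ i(1+\Phi(z_i)\Phi(z_j)) \\ \Phi(z_i)+\Phi(z_j) \end{array}\right),
\]
so that $\mathcal{F}_\Phi$ is exactly the Weierstrass primitive associated to the data $(w,q)$.

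Finally, apply the forward direction of Theorem \ref{thm:Weierstrass} to $\mathcal{F}_\Phi$: since $w = \Phi\circ z$ has inverse stereographic projection equal to $N_\Phi$, which is admissible by hypothesis, we conclude that $\Re(\mathcal{F}_\Phi)$ is A-minimal, $\Re(i\mathcal{F}_\Phi)$ is C-minimal, and the two form a conjugate pair with Gauss map $N_\Phi$. To match the statement, I would then note that the defining relation $f_\Phi - i\tilde{f}_\Phi := A_\Phi(f - i\tilde{f}) = \mathcal{F}_\Phi$ together with the requirement that $f_\Phi,\tilde{f}_\Phi$ be real forces $f_\Phi = \Re(\mathcal{F}_\Phi)$ and $\tilde{f}_\Phi = -\Im(\mathcal{F}_\Phi) = \Re(i\mathcal{F}_\Phi)$, which finishes the argument.

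I do not expect any serious obstacle: the only step that needs genuine content is Lemma \ref{lem:goursat}, and that is stated as given. The remaining work is bookkeeping --- verifying that taking real parts commutes with the (complex-linear) $A_\Phi$ in the sense made precise above, and recording that the inverse stereographic projection of $\Phi\circ z$ is by definition $N_\Phi$. Both are immediate from the definitions, so the whole proof reduces to chaining together Theorem \ref{thm:Weierstrass}, Proposition \ref{thm:mobius}, and Lemma \ref{lem:goursat}.
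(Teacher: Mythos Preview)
Your proposal is correct and follows essentially the same route as the paper: extract the holomorphic quadratic differential $q$ from the converse of Theorem~\ref{thm:Weierstrass}, apply Lemma~\ref{lem:goursat} to rewrite $A_\Phi\,d\mathcal{F}$ in Weierstrass form with respect to $\Phi\circ z$, and then read off A-minimality and C-minimality from the forward direction. The only minor difference is that you make the appeal to Proposition~\ref{thm:mobius} explicit, whereas the paper absorbs this into the phrase ``the proof of the Weierstrass representation shows\ldots''; either way the argument is the same.
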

\begin{proof}
	Note $f_{\Phi},\tilde{f}_{\Phi}$ are well defined on $M^*$ even if $M^*$ is not simply connected. We denote by $z:V \to \field{C}$ the stereographic projection of $N$. From the proof the Weierstrass representation (Theorem \ref{thm:Weierstrass}), there exists a holomorphic quadratic differential $q:E_{int} \to \field{R}$ such that
	\[
	df(e^*_{ij}) - id\tilde{f}(e^*_{ij}) =\frac{q_{ij}}{dz(e_{ij})} \left( \begin{array}{c}
	1-z_i z_j \\ i(1+z_i z_j) \\ z_i + z_j
	\end{array}\right).
	\]
	Then, by lemma \ref{lem:goursat}
	\begin{align*}
		df_{\Phi}(e^*_{ij}) - id\tilde{f}_{\Phi}(e^*_{ij}) =&\frac{q_{ij} }{dz(e_{ij})} A_{\Phi} \left( \begin{array}{c}
			1-z_i z_j \\ i(1+z_i z_j) \\ z_i + z_j
		\end{array}\right) \\
		=& \frac{q_{ij}}{(\Phi(z_j) -\Phi(z_i))} \left( \begin{array}{c} 1-\Phi(z_i)\Phi(z_j) \\ i (1+\Phi(z_i)\Phi(z_j)) \\ \Phi(z_i)+\Phi(z_j) \end{array} \right).
	\end{align*}
	The proof of the Weierstrass representation (Theorem \ref{thm:Weierstrass}) shows that $f_{\Phi}$ is A-minimal and $\tilde{f}_{\Phi}$ is C-minimal.
\end{proof}
\begin{figure}[h]
	\centering
	\includegraphics[width=0.9\textwidth]{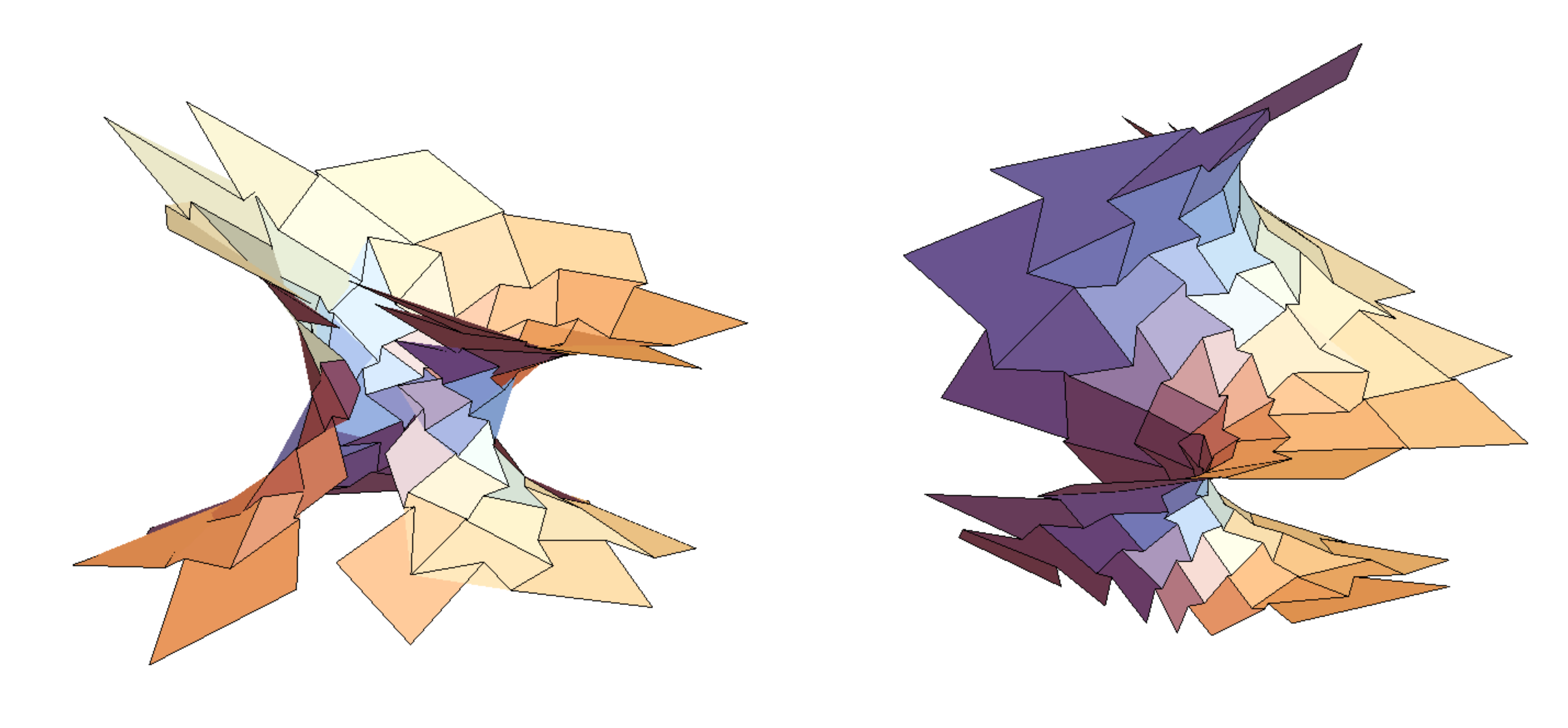}
	\caption{A conjugate pair of discrete minimal surfaces obtained via a Goursat transform (a complex rotation) of the conjugate pair in Figure \ref{fig:conjugate}.}
	\label{fig:goursat}
\end{figure}

\section{Self-stresses}\label{sec:selfstress}

The Weierstrass representation asserts that a smooth minimal surface is locally determined by its Gauss map together with a holomorphic quadratic differential. A holomorphic quadratic differential in this case can be interpreted as a static stress \cite{Smyth2004}. 

In this section, we provide a discrete version of the above statement: each discrete minimal surface corresponds to a \emph{self-stress} on its Gauss map. It is closely related to Maxwell theorem asserting that a self-stress, i.e. an assignment of forces along the edges of a realization balanced at vertices, corresponds to a \emph{reciprocal parallel mesh} of the realization \cite{Maxwell1870}. Here we focus on triangulated surfaces for simplicity.

There are several results concerning self-stresses and discrete minimal surfaces \cite{Wallner2008,Muller2010,Schief2014}, though different from the consideration here.

The following is immediate from the definition of A-minimal surfaces.
\begin{proposition} \label{thm:selfstress}
	Suppose we have a non-degenerate admissible realization $N:V \to \field{S}^2$ of a simply connected triangulated surface $M$. Given a function $k:E_{int} \to \field{R}$, the following are equivalent: 
	\begin{enumerate}
		\item There exists an A-minimal surface $f:V^* \to \field{R}^3$ with Gauss map $N$ satisfying for every interior edge $\{ij\}$
		\[
		df(e_{ij}^*)=k_{ij} (N_j- N_i).
		\]
		\item The assignment of $k_{ij} (N_j-N_i)$ to each oriented edge $e_{ij}$ defines equal and opposite forces to the two endpoints along every edge that is in equilibrium at every vertex, i.e. for every interior vertex $i$
		\[
		\sum_j k_{ij} (N_j-N_i) =0.
		\]
	\end{enumerate}

\end{proposition}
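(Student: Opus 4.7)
The plan is to recognize the proposition as the standard duality between closed and exact $\mathbb{R}^3$-valued dual 1-forms on a simply connected complex, applied to the candidate dual 1-form
\[
\omega(e_{ij}^*) := k_{ij}\,(N_j - N_i).
\]
Because $k$ is defined on unordered interior edges while $N_j - N_i$ changes sign under orientation reversal, $\omega$ is automatically antisymmetric on oriented dual edges, so it is a legitimate dual 1-form on $M^*$ in the sense of Section~\ref{sec:notations}.

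For $(1)\Rightarrow(2)$, I would simply observe that if an A-minimal $f$ with $df(e_{ij}^*) = k_{ij}(N_j - N_i)$ exists, then $df$ is exact on $M^*$, hence closed, and closedness around the dual face $i^*\in F^*$ corresponding to an interior vertex $i\in V_{int}$ is precisely $\sum_j df(e_{ij}^*) = 0$. Substituting yields the equilibrium condition $\sum_j k_{ij}(N_j - N_i) = 0$.

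For $(2)\Rightarrow(1)$, the self-stress hypothesis is exactly closedness of $\omega$ on $M^*$. Since $M$ is simply connected, so is $M^*$, and therefore $\omega$ is exact: there exists $f:V^*\to\mathbb{R}^3$, unique up to a translation, with $df = \omega$. The two A-minimal conditions then follow by direct computation using $|N_i|=|N_j|=1$, namely
\[
dN(e_{ij}) \times df(e_{ij}^*) = (N_j - N_i) \times k_{ij}(N_j - N_i) = 0
\]
and
\[
\langle N_i + N_j,\, df(e_{ij}^*)\rangle = k_{ij}\,\langle N_i + N_j,\, N_j - N_i\rangle = k_{ij}(|N_j|^2 - |N_i|^2) = 0.
\]

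I do not expect a serious obstacle: the proof reduces to a closed-versus-exact dichotomy on a simply connected surface, together with two elementary $\mathbb{S}^2$-computations. The only point needing a moment of care is consistency of the sign and orientation conventions for dual edges, which is already built into the paper's setup in Section~\ref{sec:notations} and is respected automatically by the symmetry $k_{ij} = k_{ji}$ inherent in $k$ being a function on unordered edges.
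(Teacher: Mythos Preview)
Your proposal is correct and matches the paper's intent: the paper does not give an explicit proof but simply states that the proposition ``is immediate from the definition of A-minimal surfaces,'' and your argument spells out precisely this immediacy---exactness of $df$ on the simply connected dual gives closedness (hence the equilibrium equations), while conversely closedness of $\omega$ yields an $f$ whose A-minimality follows from the two elementary identities you wrote down.
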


There is a similar correspondence between a C-minimal surface and the polar of its Gauss map. Given a point $\hat{x} \in \field{R}^3$ its \emph{polar plane} with respect to the unit sphere is defined as
\[
\bar{x}:=\{y \in \field{R}^3| \langle y, \hat{x} \rangle=1\}
\]
and $\hat{x}$ is called the \emph{pole} of $\bar{x}$. If $N:V \to \field{S}^2$ is a non-degenerate admissible realization of a triangulated surface in such a way that neighboring faces are not coplanar, then the pole of each face determines a non-degenerate realization $\hat{N}:V^* \to \field{R}^3$ of $M^*$ with planar faces tangent to the unit sphere and with face normal $N$. In particular the image of each dual edge $\{ij\}^*$ under $\hat{N}$ is parallel to $N_i \times N_j$.

\begin{lemma} \label{lem:polar}
	Let $N,N_1\dots N_r \in \field{S}^2$ and $k_1,k_2 \dots k_r \in \field{R}$. Then
	\[
	\sum_{j=1}^r k_j (N_j - N) =0 \quad \iff  \begin{cases}
	\sum_{j=1}^r k_j N \times N_j=0 \\
	\sum_{j=1}^r k_j r_j \times (N \times N_j)=0
	\end{cases}
	\]
	where $r_j := (N + N_j)/(1+\langle N,N_j \rangle)$ is a point on the line $\{x \in \field{R}^3| \langle x, N \rangle=1 \} \cap \{x \in \field{R}^3| \langle x, N_j \rangle=1 \}$. 
\end{lemma}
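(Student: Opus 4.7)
The plan is to prove the equivalence by elementary vector algebra, using two ingredients: the defining property of $r_j$ as the intersection of two polar planes, and the vector triple product identity $A \times (B \times C) = B\langle A, C\rangle - C\langle A, B\rangle$.

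First I would verify directly from the definition $r_j := (N+N_j)/(1+\langle N, N_j\rangle)$ together with $|N|=|N_j|=1$ that
\[
\langle r_j, N\rangle = \frac{1+\langle N_j,N\rangle}{1+\langle N,N_j\rangle} = 1, \qquad \langle r_j, N_j\rangle = \frac{\langle N,N_j\rangle+1}{1+\langle N,N_j\rangle} = 1,
\]
which confirms that $r_j$ does lie on the two polar planes as claimed. Then applying the triple product identity and substituting these two values causes the expression to collapse:
\[
r_j \times (N \times N_j) \;=\; N\,\langle r_j,N_j\rangle - N_j\,\langle r_j,N\rangle \;=\; N - N_j.
\]
Summing against the weights $k_j$ gives
\[
\sum_{j=1}^r k_j\, r_j \times (N\times N_j) \;=\; -\sum_{j=1}^r k_j (N_j - N),
\]
so the second right-hand equation is by itself equivalent to the left-hand condition.

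For the forward direction I would additionally take the cross product of $\sum_j k_j(N_j - N) = 0$ with $N$ on the left; since $N\times N = 0$, this immediately yields the first right-hand equation $\sum_j k_j (N\times N_j) = 0$. Combined with the observation above, this establishes both implications of the stated equivalence.

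There is no real obstacle here; the whole lemma is a short computation. The only subtle point worth flagging is that the second right-hand equation already carries the full content of the left-hand side, so the first is logically redundant. It is nevertheless natural to state both, because in the self-stress picture of the following discussion they correspond respectively to the vanishing of the \emph{total force} and the vanishing of the \emph{total moment} of the edge forces $k_{ij}\,N_i\times N_j$ acting at the points $r_{ij}$ along the edges of the polar mesh $\hat N$.
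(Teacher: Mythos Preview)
Your proof is correct and follows essentially the same route as the paper: both arguments reduce to the identity $r_j \times (N \times N_j) = N - N_j$ obtained from the vector triple product, together with the observation that crossing $\sum_j k_j(N_j-N)$ with $N$ yields the first right-hand equation. Your added remark that the torque equation alone already implies the left-hand side (making the force equation redundant) is correct and is implicit in the paper's proof as well.
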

\begin{proof}
	It follows from the identities that
	\[
	\sum_{j=1}^r k_{j} N \times N_j = N \times \sum_{j=1}^r k_{j} (N_j - N)
	\]
	and
	\begin{align*}
		\sum_{j=1}^r k_j r_j \times (N \times N_j) &= \sum_{j=1}^r k_j \frac{-(N_j - \langle N,N_j \rangle N) + (N - \langle N,N_j \rangle N_j)}{(1+\langle N,N_j \rangle)} \\
		&= \sum_{j=1}^r k_j (N - N_j). 
	\end{align*}
\end{proof}

Combining Theorem \ref{thm:conjugate}, Theorem \ref{thm:selfstress} and Lemma \ref{lem:polar}, we have
\begin{proposition}
	Suppose a realization $N:V \to \field{S}^2$ of a simply connected triangulated surface $M$ and its polar $\hat{N}:V^* \to \field{R}^3$ with respect to the unit sphere are non-degenerate. Given $k:E_{int} \to \field{R}$, the following are equivalent: 
	\begin{enumerate}
		\item There exists a C-minimal surface $\tilde{f}:V^* \to \field{R}^3$ with Gauss map $N$ such that for every interior edge $\{ij\}$
		\[
		df(e_{ij}^*)=k_{ij} N_i \times N_j.
		\]
		\item The assignment of $k_{ij} N_i \times N_j$ to each oriented edge $e^*_{ij}$ defines equal and opposite forces to the neighboring faces along each edge of $\hat{N}$ that is in equilibrium on every face, i.e. for every dual face $i^* \in F^*_{int}$
		\begin{align*}
			\sum_j k_{ij} N_i \times N_j &=0 \quad \quad \text{(forces balanced)}, \\
			\sum_j k_{ij} r_{ij} \times (N_i \times N_j) &=0 \quad \quad \text{(torques balanced)}
		\end{align*}
		where $r_{ij} := (N + N_j)/(1+\langle N,N_j \rangle)$ is a point on the image of $\{ij\}^*$ under $\hat{N}$.
	\end{enumerate} 
\end{proposition}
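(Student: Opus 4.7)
The plan is to chain the three results cited in the preamble — Theorem~\ref{thm:conjugate}, Proposition~\ref{thm:selfstress}, and Lemma~\ref{lem:polar} — converting (1) into (2) step by step. First, the conjugation of Theorem~\ref{thm:conjugate} translates a C-minimal $\tilde{f}$ with edge vectors $k_{ij} N_i \times N_j$ into an A-minimal $f$ with edge vectors $k_{ij}(N_j - N_i)$; then Proposition~\ref{thm:selfstress} turns the existence of such $f$ into a single vertex-balance equation on $N$; and finally Lemma~\ref{lem:polar} splits that equation into the force and torque balances on $\hat{N}$.

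First I would observe that the hypotheses — non-degeneracy and admissibility of $N$ together with non-degeneracy of the polar $\hat{N}$ — imply $N_i \neq \pm N_j$ on every interior edge, so that $N_i \times N_j \neq 0$ and $1 + \langle N_i, N_j\rangle > 0$. In particular, Lemma~\ref{lem:dotmean} guarantees that any C-minimal surface with Gauss map $N$ admits the unique factorization $d\tilde{f}(e^*_{ij}) = k_{ij} N_i \times N_j$, and statement (1) is equivalent to the exactness on $M^*$ of the dual 1-form $\omega(e^*_{ij}) := k_{ij} N_i \times N_j$. Using the explicit conjugation formula from the proof of Theorem~\ref{thm:conjugate} together with the identity $N_i \times (N_i \times N_j) = \langle N_i, N_j\rangle N_i - N_j$, a short computation shows that the conjugate A-minimal surface $f$ has $df(e^*_{ij}) = k_{ij}(N_j - N_i)$. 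Hence (1) is equivalent to the existence of an A-minimal $f$ with Gauss map $N$ whose edge vectors are $k_{ij}(N_j - N_i)$.

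Second, I would invoke Proposition~\ref{thm:selfstress}: this reformulates the existence of $f$ as the single vector self-stress equation $\sum_j k_{ij}(N_j - N_i) = 0$ at every interior vertex $i \in V_{int}$. Applying Lemma~\ref{lem:polar} at each such $i$, with the roles $N := N_i$ and $N_j := N_j$ and summing over neighbors $j$, rewrites this as the pair
\[
\sum_j k_{ij} N_i \times N_j = 0 \quad \text{and} \quad \sum_j k_{ij} r_{ij} \times (N_i \times N_j) = 0,
\]
with $r_{ij} = (N_i + N_j)/(1 + \langle N_i, N_j\rangle)$. A direct check gives $\langle r_{ij}, N_i\rangle = \langle r_{ij}, N_j\rangle = 1$, so $r_{ij}$ lies on the line $\{\langle x, N_i\rangle = 1\}\cap\{\langle x, N_j\rangle = 1\}$, which contains the image of the dual edge $\{ij\}^*$ under $\hat{N}$. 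Under the combinatorial identification $V_{int} \cong F^*_{int}$, these are exactly the force and torque balances on each dual face of $\hat{N}$ asserted in (2).

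I do not anticipate a serious obstacle: the whole argument is an algebraic consequence of the three cited statements, glued together by the double cross product identity and the one-line verification that $r_{ij}$ lies on both polar planes. The only delicate point is ensuring $N_i \neq \pm N_j$ on every interior edge, for which the combined non-degeneracy and admissibility hypotheses on $N$ and $\hat{N}$ suffice.
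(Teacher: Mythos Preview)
Your proposal is correct and follows exactly the route the paper intends: the paper's own proof consists solely of the sentence ``Combining Theorem~\ref{thm:conjugate}, Theorem~\ref{thm:selfstress} and Lemma~\ref{lem:polar}, we have,'' and your write-up simply unpacks that chain in the natural order. The only extra detail you supply beyond the paper is the explicit check that $r_{ij}$ lies on the relevant polar line, which is a one-line verification already contained in the statement of Lemma~\ref{lem:polar}.
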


\begin{remark}
	The discussion above can be applied to discrete minimal surfaces with general combinatorics. By vertex splitting, every discrete minimal surface can be regarded as trivalent with its Gauss map triangulated. Here discrete minimal surfaces are allowed to have degenerate edges. 
\end{remark}

\section{Critical points of the total area}\label{sec:criticalarea}

In this section, we show that among the discrete minimal surfaces that we have discussed, there is a subclass of them which possess the variational property of minimal surfaces. 

\begin{figure}[h]
	\centering
	\includegraphics[width=0.5\textwidth]{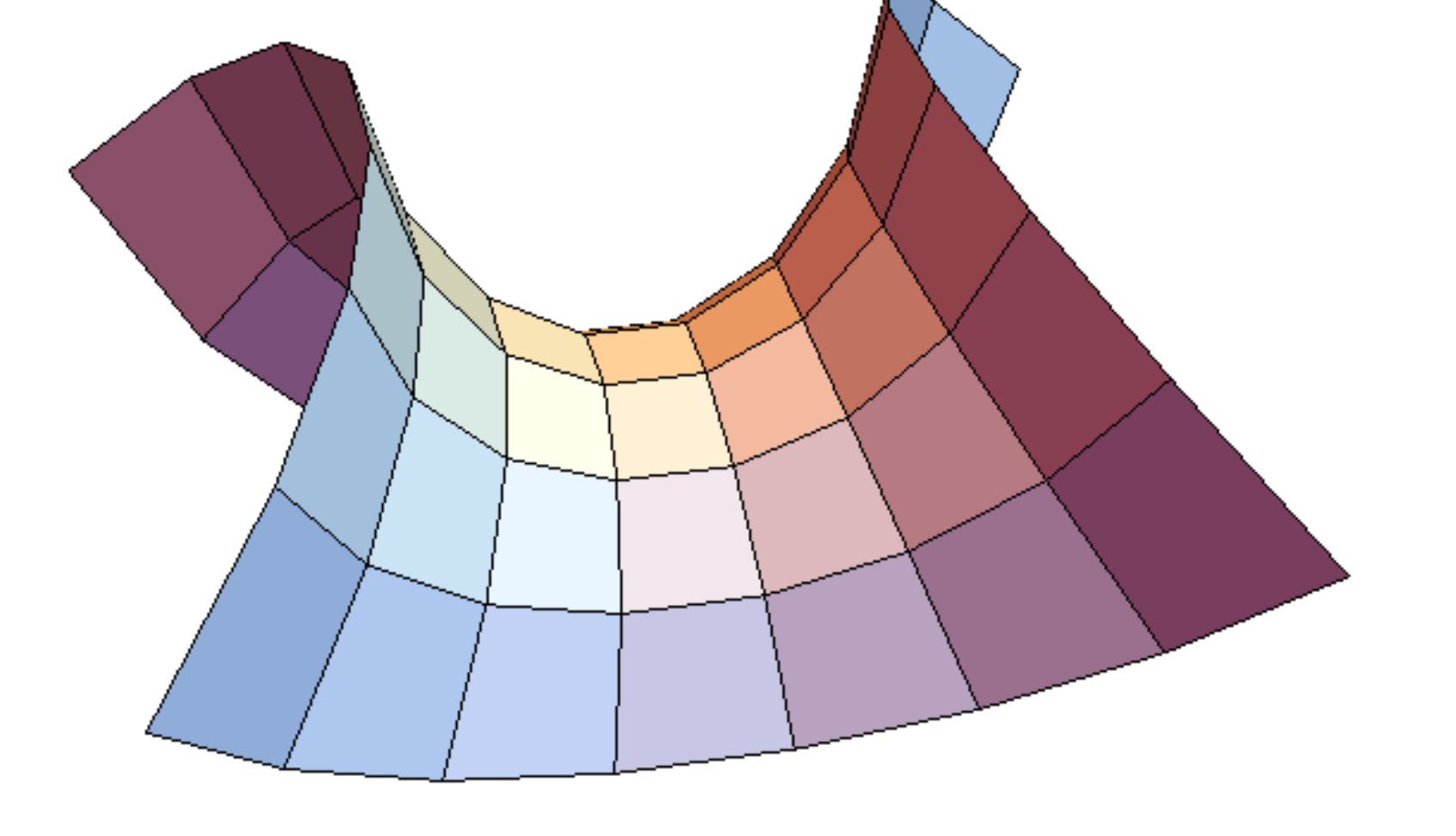}
	\caption{A C-minimal discrete surface which is a critical point of the area functional. Subdivision by adding diagonals yields a triangulated minimal surface in the sense of Pinkall and Polthier \cite{Pinkall1993}.}
	\label{fig:ennerper}
\end{figure}

\subsection{Area of non-planar faces}
Discrete surfaces in the associated families do not have planar faces in general. In order to define area on a non-planar face, we consider its vector area. This idea has been applied in computer graphics \cite{Alexa2011}.

Given a polygon $\gamma=(\gamma_0,\gamma_1,\dots,\gamma_n=\gamma_0)$ in $\field{R}^3$, its \emph{vector area} is defined by
\[
\vec{A}_\gamma = \frac{1}{2}\sum_{i=0}^{n-1} \gamma_i \times \gamma_{i+1}.
\]

The vector area is invariant under translations of the polygon. The magnitude $|\vec{A}_\gamma|$ is the largest signed area over all orthogonal projections of $\gamma$ to planes in space. The direction of $\vec{A}_\gamma$ indicates the normal of the plane with largest signed area. If $\gamma$ is embedded in a plane, the magnitude $|\vec{A}_\gamma|$ coincides with the usual notion of area and the direction of $\vec{A}$ is normal to the plane.

However, there is still an ambiguity to define the (signed) area $A_\gamma$ of a non-planar polygon $\gamma$ using the vector area, either $A_\gamma:=|\vec{A}_\gamma|$ or $-|\vec{A}_\gamma|$. Such ambiguity will be fixed using the Gauss map of a discrete minimal surface in Section \ref{sec:minifromp}. Assuming we have picked a sign for the area of each face, we derive the gradient of the total area of a discrete surface.

\begin{definition}
	Suppose $f:V \to \field{R}^3$ is a realization of a compact discrete surface $M=(V,E,F)$ with boundary. Let $\sigma: F \to \pm 1$ be a choice of signs. We have the vector area $\vec{A}:F \to \field{R}^3$ given by 
	\[
	\vec{A}_{\phi} = \frac{1}{2}\sum_{i=0}^{n-1} f_i \times f_{i+1}
	\]
	where $(v_0,v_1,\dots,v_{n-1})= \phi \in F$. We define the \emph{total area} $\area_{\sigma}(f)$ by
	\[
	\area_{\sigma}(f) = \sum_{\phi \in F} \sigma_{\phi} |\vec{A}_{\phi}|.
	\]
	and, if $\vec{A}\neq 0 $, the \emph{mean curvature vector field} $\vec{H}_{\sigma}:V_{int} \to \field{R}^3$ by
	\[
	\vec{H}_{\sigma,i} := \frac{1}{2}\sum_{j} dN_{\sigma}(e^*_{ij}) \times df(e_{ij}) \quad \forall \, i \in V_{int}
	\]
	where $N_{\sigma}:F \to \field{S}^2$ is given by $N_{\sigma} := \sigma \vec{A}/|\vec{A}|$.
\end{definition}

If $f_t$ is a family of realizations with $f_0=f$ and $\vec{A}_t$ is nonzero on any face, then the total area $\area_{\sigma}(f_t)$ depends smoothly on $t$.

\begin{proposition}\label{thm:gradarea}
	Suppose $f:V \to \field{R}^3$ is a realization of a compact discrete surface $M=(V,E,F)$ with boundary and with non-vanishing vector area $\vec{A}:F \to \field{R}^3\backslash \{0\}$. Let $\sigma: F \to \pm 1$ be a choice of signs. Then the mean curvature vector field $\vec{H}_{\sigma}$ vanishes identically if and only if $f$ is a critical point of the total area $\area_{\sigma}(f)$ under infinitesimal deformations with the boundary fixed.
	
\end{proposition}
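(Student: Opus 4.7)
The plan is to compute the first variation of $\area_\sigma$ directly and identify the resulting gradient with $-\vec{H}_\sigma$. Let $X\colon V\to\field{R}^3$ be a variation field vanishing on the boundary, set $f_t:=f+tX$, and observe that since $\vec{A}\neq 0$ on every face at $t=0$ the signed face areas $\sigma_\phi|\vec{A}_\phi(f_t)|$ remain smooth in $t$ near $0$; hence $\area_\sigma$ is differentiable and its derivative can be analyzed face by face.

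First I would compute the local effect at a single face $\phi=(v_0,\dots,v_{n-1})$ with $v_k=i$. Only the two consecutive terms $f_{v_{k-1}}\times f_i$ and $f_i\times f_{v_{k+1}}$ of $\vec{A}_\phi$ involve $f_i$, so
\[
\frac{d}{dt}\Big|_{t=0}\vec{A}_\phi \;=\; \tfrac12\,X_i\times(f_{v_{k+1}}-f_{v_{k-1}}).
\]
Pairing with $\vec{A}_\phi/|\vec{A}_\phi|=\sigma_\phi N_{\sigma,\phi}$ and using $\sigma_\phi^2=1$ together with the scalar triple product yields
\[
\frac{d}{dt}\Big|_{t=0}\bigl(\sigma_\phi|\vec{A}_\phi(f_t)|\bigr) \;=\; \tfrac12\,X_i\cdot\bigl((f_{v_{k+1}}-f_{v_{k-1}})\times N_{\sigma,\phi}\bigr).
\]

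Second I would reorganize the sum over faces $\phi\ni i$ as a sum over the neighbors $j$ of $i$: each $j$ appears as the cyclic successor of $i$ in the left face $\phi_l(e_{ij})$ and as the cyclic predecessor of $i$ in the right face $\phi_r(e_{ij})$, so
\[
\sum_{\phi\ni i}\bigl(f^{\phi}_{v_{k+1}}-f^{\phi}_{v_{k-1}}\bigr)\times N_{\sigma,\phi} \;=\; \sum_{j} f_j\times dN_\sigma(e_{ij}^*).
\]
Around an interior vertex the face normals telescope, giving $\sum_j dN_\sigma(e_{ij}^*)=0$, so one may freely replace $f_j$ by $df(e_{ij})=f_j-f_i$. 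Assembling everything,
\[
\frac{d}{dt}\Big|_{t=0}\area_\sigma(f_t) \;=\; \sum_{i\in V_{int}} X_i\cdot\tfrac12\sum_{j} df(e_{ij})\times dN_\sigma(e_{ij}^*) \;=\; -\sum_{i\in V_{int}} X_i\cdot\vec{H}_{\sigma,i},
\]
which vanishes for every admissible $X$ precisely when $\vec{H}_\sigma\equiv 0$ on $V_{int}$, giving both directions. The only real obstacle is the orientation bookkeeping, namely verifying that the per-face contributions $\tfrac12 X_i\times(f_{v_{k+1}}-f_{v_{k-1}})$ assemble consistently into the edge-based expression defining $\vec{H}_\sigma$; the telescoping identity $\sum_j dN_\sigma(e_{ij}^*)=0$ is what makes this step essentially automatic.
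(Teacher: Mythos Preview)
Your proposal is correct and follows essentially the same route as the paper: differentiate $\sigma_\phi|\vec A_\phi|=\langle \vec A_\phi,N_{\sigma,\phi}\rangle$ face by face (using $\langle \vec A_\phi,\dot N_{\sigma,\phi}\rangle=0$), rewrite via the scalar triple product, and regroup the face contributions at each interior vertex into the edge sum $\tfrac12\sum_j dN_\sigma(e_{ij}^*)\times df(e_{ij})$. The only cosmetic difference is that you make the passage from $f_j$ to $df(e_{ij})$ explicit via the telescoping identity $\sum_j dN_\sigma(e_{ij}^*)=0$, whereas the paper inserts $\pm f_i$ from the start; your final sign differs from the paper's displayed formula, but this is immaterial to the criticality statement.
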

\begin{proof}
	Suppose $\dot{f}:V \to \field{R}^3$ is an infinitesimal deformation of $f$ fixing the boundary. We define $N_{\sigma}:F \to \field{S}^2$ by $N_{\sigma}:= \sigma \vec{A}/|\vec{A}|$. We consider a face $(v_0,v_1,\dots,v_{n-1})= \phi \in F$ and write $f_i= f(v_i)$.
	
	Since $ \langle N_{\sigma,\phi}, \dot{N}_{\sigma,\phi} \rangle =0$ we have
	\[
	\langle \vec{A}_\phi, \dot{N}_{\sigma,\phi} \rangle =0.
	\]
	On the other hand,	
	\begin{align*}
		\langle \dot{\vec{A}}_\phi, N_{\sigma,\phi} \rangle
		&= \frac{1}{2}\sum_{i=0}^{n-1} \langle f_{i-1} \times \dot{f}_{i} + \dot{f}_i \times f_{i+1}, N_{\sigma,\phi} \rangle \\
		&= \frac{1}{2}\sum_{i=0}^{n-1} \langle (f_{i+1} - f_i + f_i -f_{i-1}) \times N_{\sigma,\phi}, \dot{f}_{i} \rangle \\
		&= \frac{1}{2}\sum_{i=0}^{n-1} \langle  (f_{i+1}-f_{i}) \times N_{\sigma,\phi}+ (f_{i-1}-f_{i}) \times (-N_{\sigma,\phi}), \dot{f}_{i} \rangle.
	\end{align*}
	Thus,
	\begin{align*}
		\dot{\area_{\sigma}(f)} =& \sum_{\phi \in F} \langle \dot{\vec{A}}_\phi, N_{\sigma,\phi} \rangle + \langle \vec{A}_\phi, \dot{N}_{\sigma,\phi} \rangle \\
		=& \frac{1}{2}\sum_{i\in V_{int}} \langle \sum_{j} dN_{\sigma}(e^*_{ij}) \times df(e_{ij}), \dot{f}_i \rangle \\
		=& \sum_{i\in V_{int}} \langle \vec{H}_{\sigma,i}, \dot{f}_i \rangle.
	\end{align*}
	It implies $f$ is a critical point of the area functional $\area_{\sigma}(f)$ under infinitesimal deformations with the boundary fixed if and only if its mean curvature vector field $\vec{H}_{\sigma}$ vanishes identically.
\end{proof}

\begin{figure}[h]
	\centering
	\includegraphics[width=0.8\textwidth]{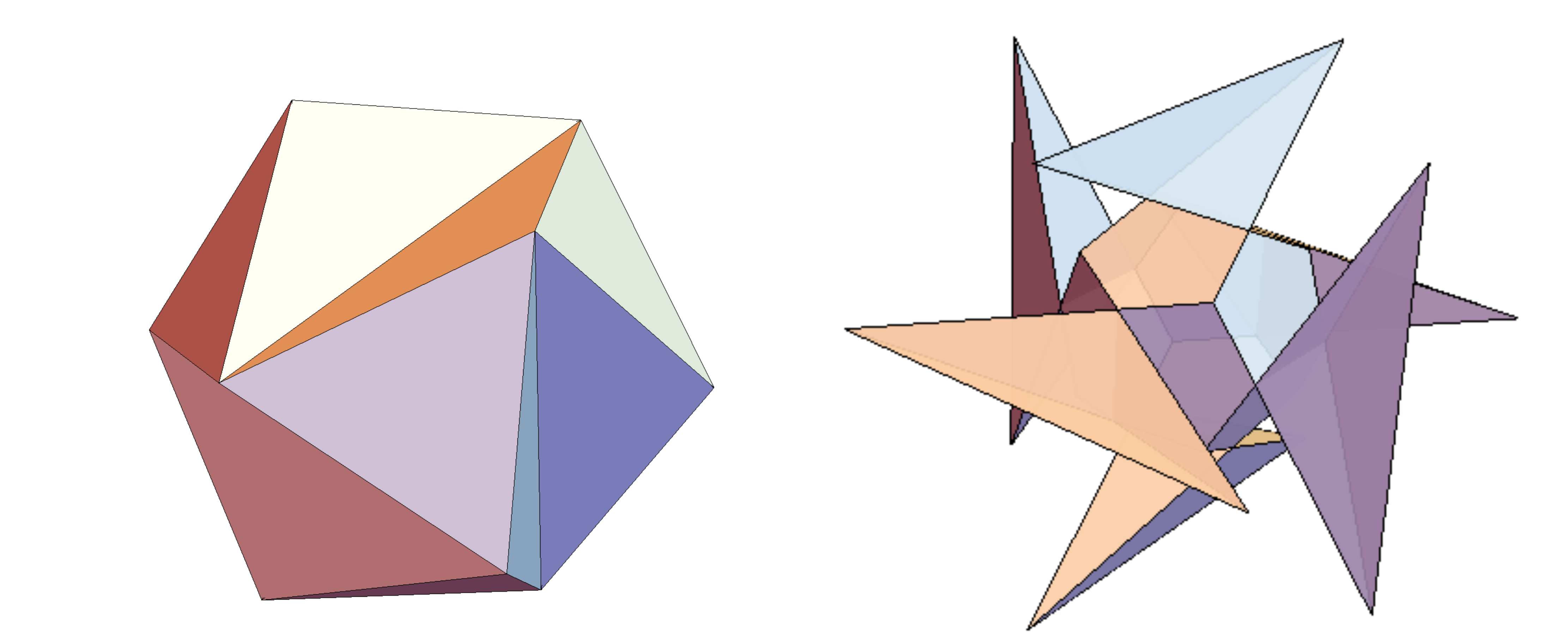}
	\caption{Jessen's orthogonal icosahedron (left) is known to be infinitesimally flexible. Its infinitesimal isometric deformation yields a \emph{closed} C-minimal surface (right) with non-embedded planar faces. For each face of the discrete minimal
		surface, the vector area vanishes.}
	\label{fig:jessen}
\end{figure}

Before ending this section, we relate mean curvature vector field with the cotangent formula by Pinkall and Polthier \cite{Pinkall1993}. 

\begin{corollary}\label{cor:cotangentminimal}
	Suppose $f:V \to \field{R}^3$ is a realization of a triangulated surface such that each face spans an affine plane. Then
	\begin{equation*}
		\sum_{j} dN(e^*_{ij}) \times df(e_{ij}) = \sum_{j} (\cot \angle jki + \cot \angle ilj)  df(e_{ij})   \label{eq:areaminimalg}
	\end{equation*}
	where $\{ijk\}$ and $\{jli\}$ are two neighboring faces containing the edge $\{ij\}$ and $N:= \vec{A}/|\vec{A}|$ is the face normal field given by the orientation of the triangulated surface. 
	
	Hence, a realization $f:V \to \field{R}^3$ of a compact triangulated surface is a critical point of the area functional $\Area_{\sigma}(f)$, where $\sigma\equiv 1$, under infinitesimal deformations with the boundary fixed if and only if for every interior vertex $i$
	\[
	\sum_j (\cot \angle jki + \cot \angle ilj) (f_j - f_i) =0.
	\]
\end{corollary}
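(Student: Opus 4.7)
The plan is to reduce the pointwise identity to a triangle-by-triangle calculation. The starting point is the classical Pinkall--Polthier cotangent identity on a single non-degenerate triangle $\{i,j,k\}$ with unit normal $N$ compatible with the ordering $i\to j\to k$:
\[
N \times (f_k - f_j) \;=\; \cot(\angle jki)\,(f_i - f_j) \;+\; \cot(\angle ijk)\,(f_i - f_k).
\]
I would derive this by computing the gradient of the triangle's area in two ways. First, differentiating $\vec A_{ijk}=\tfrac12(f_j-f_i)\times(f_k-f_i)$ with respect to $f_i$ and contracting with $N=\vec A/|\vec A|$ gives $\nabla_i|\vec A|=\tfrac12\,N\times(f_k-f_j)$; second, a direct computation in the affine plane of the triangle (place $i$ at the origin and $j$ on the positive $x$-axis) expresses the same gradient in the basis $\{f_j-f_i,\,f_k-f_i\}$ with cotangent coefficients. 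Matching the two yields the per-triangle identity.

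Next I would reorganise the left-hand side of the corollary by faces. Label the triangles incident to a fixed interior vertex $i$ cyclically as $T_m=\{i,v_m,v_{m+1}\}$; the convention that $e^*_{ij}$ points from the right face to the left face then reads $dN(e^*_{i,v_m})=N_{T_m}-N_{T_{m-1}}$. Splitting the sum and performing an index shift on the subtracted copy gives
\[
\sum_{j} dN(e^*_{ij})\times df(e_{ij}) \;=\; \sum_m \bigl(N_{T_m}-N_{T_{m-1}}\bigr)\times(f_{v_m}-f_i) \;=\; -\sum_m N_{T_m}\times(f_{v_{m+1}}-f_{v_m}).
\]
Substituting the per-triangle identity into each face term and regrouping by neighbour $v_n$: the neighbour $v_n$ picks up a contribution from $T_n$ through the angle at $v_{n+1}$ (the vertex opposite edge $\{i,v_n\}$ in $T_n$) and from $T_{n-1}$ through the angle at $v_{n-1}$ (opposite the same edge in $T_{n-1}$). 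These are precisely $\cot\angle jki$ and $\cot\angle ilj$ of the statement, which proves the displayed identity.

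For the critical-point assertion, the hypothesis that each face spans an affine plane implies $\vec A\neq 0$ everywhere, so Proposition \ref{thm:gradarea} applies with $\sigma\equiv 1$: the realization $f$ is critical for $\area_{\sigma}(f)$ under boundary-fixing variations if and only if $\vec H_{\sigma,i}=\tfrac12\sum_j dN(e^*_{ij})\times df(e_{ij})$ vanishes at every interior vertex. The identity just proved recasts this vanishing as $\sum_j(\cot\angle jki + \cot\angle ilj)(f_j-f_i)=0$, which is the desired conclusion.

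The main pitfall I would guard against is sign discipline: the dual-edge orientation convention, the asymmetry of the per-triangle identity (which naturally produces $f_i-f_j$ rather than $f_j-f_i$), and the minus sign arising from $f_{v_m}-f_{v_{m+1}}=-(f_{v_{m+1}}-f_{v_m})$ in the index shift interact delicately and can easily produce a spurious overall sign. Before committing to the general bookkeeping I would verify the composite sign on an equilateral hexagonal umbrella, where every cotangent equals $1/\sqrt 3$ and every cross product is explicit, to pin down each convention once and for all.
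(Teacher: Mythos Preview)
Your proposal is correct and follows essentially the same route as the paper: both reorganise the sum $\sum_j dN(e^*_{ij})\times df(e_{ij})$ into a sum over faces incident to $i$, obtain $\sum_{ijk} N_{ijk}\times df(e_{kj})$, apply the single-triangle identity $N_{ijk}\times(f_j-f_k)=\cot(\angle jki)(f_j-f_i)+\cot(\angle ijk)(f_k-f_i)$, and regroup by edge; the critical-point statement then follows from Proposition~\ref{thm:gradarea}. Your version is slightly more detailed in that you supply a derivation of the per-triangle identity and flag the sign bookkeeping, but the argument is the same.
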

\begin{proof}It follows from Proposition \ref{thm:gradarea} and the following identity that for every interior vertex $i$
	\begin{align*}
		\sum_{j} dN(e^*_{ij}) \times df(e_{ij})  
		=& \sum_{ijk} N_{ijk} \times (df(e_{ij}) - df(e_{ik})) \\
		=& \sum_{ijk} N_{ijk} \times df(e_{kj}) \\
		=& \sum_{ijk} \cot \angle jki \, df(e_{ij}) - \cot \angle ijk \, df(e_{ki}) \\
		=&\sum_{j} (\cot \angle jki + \cot \angle ilj)  df(e_{ij}).
	\end{align*}
\end{proof}

\subsection{Minimal surfaces from P-nets} \label{sec:minifromp}

We consider a P-net $z:V \to \field{C}$, which is a quadrivalent discrete surface possessing integrable structures as introduced in Definition \ref{def:pnet}. There is a canonical discrete holomorphic quadratic differential given by its P-labeling $\mu:E \to \pm 1$. Via the Weierstrass representation (Theorem \ref{thm:Weierstrass}), the holomorphic quadratic differential $\mu$ yields a conjugate pair $(f, \tilde{f})$ of discrete minimal surfaces. Denoting $N:V \to \field{S}^2$ the inverse stereographic projection of $z$
\begin{equation*}
	N := \frac{1}{1+|z|^2} \left(\begin{array}{c}2\Re z \\ 2\Im z \\ |z|^2-1\end{array}\right),		
\end{equation*}
then $f$ is an A-minimal surface satisfying 
\begin{align} \label{eq:aminimal}
	df(e^*_{ij}) & =\mu_{ij}\frac{N_j-N_i}{|N_j-N_i|^2}
\end{align}
and $\tilde{f}$ is a C-minimal surface satisfying
\begin{align} \label{eq:cminimal}
	d\tilde{f}(e^*_{ij}) = N_i \times \mu_{ij} \frac{N_j-N_i}{|N_j-N_i|^2} =N_j \times \mu_{ij} \frac{N_j-N_i}{|N_j-N_i|^2}.
\end{align}

We first consider the vector area of the discrete surfaces in the associated family. The following illustrates a discrete counterpart of the property that the area 2-form and the Gauss map of a smooth minimal surface is unchanged within the associated family. 

\begin{proposition}\label{lem:vecareaconst}
	Suppose $D$ is a P-graph and $z: V \to \field{C}$ is a P-net. We denote by $N:V \to \field{S}^2$ the stereographic projection of $z$ and write $f,\tilde{f}:V^* \to \field{R}^3$ as the corresponding conjugate minimal surfaces given by \eqref{eq:aminimal} and \eqref{eq:cminimal}. Considering the vector area $\vec{A}^{\theta}: F^*_{int} \to \field{R}^3$ of a discrete surface $f^{\theta}:= (\cos \theta) f + (\sin \theta) \tilde{f}$ in the associated family, then we have $\vec{A}^{\theta}$ independent of $\theta$ and parallel to $N$. 
\end{proposition}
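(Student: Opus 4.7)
The plan is to invoke the Weierstrass representation of Theorem \ref{thm:Weierstrass}, writing $f^{\theta}=\Re(e^{i\theta}\mathcal{F})$ for the holomorphic null curve $\mathcal{F}:V^{*}\to\field{C}^{3}$ determined by $q=\mu$. Because every interior vertex of the P-graph has degree four, the face $i^{*}$ is a quadrilateral whose complex edges $d\mathcal{F}_{k}:=d\mathcal{F}(e^{*}_{ij_{k}})$, $k=1,2,3,4$ taken in cyclic order, sum to zero. Using the formula $\vec{A}=\tfrac12\sum_{k<l}e_{k}\times e_{l}$ for the vector area of a closed polygon, we get
\begin{equation*}
\vec{A}^{\theta}_{i}=\tfrac12\sum_{k<l}\Re(e^{i\theta}d\mathcal{F}_{k})\times\Re(e^{i\theta}d\mathcal{F}_{l}).
\end{equation*}
The identity $\Re(e^{i\theta}u)\times\Re(e^{i\theta}v)=\tfrac12\Re(u\times\bar v)+\tfrac12\Re\bigl(e^{2i\theta}\,u\times v\bigr)$ then splits this into a $\theta$-independent part and an oscillatory part governed by the single $\field{C}^{3}$-vector $\mathcal{V}_{i}:=\sum_{k<l}d\mathcal{F}_{k}\times d\mathcal{F}_{l}$, so $\theta$-independence reduces to showing $\mathcal{V}_{i}=0$.

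For this, I would substitute $d\mathcal{F}_{k}=\tfrac{\mu_{k}}{z_{j_{k}}-z_{i}}U_{k}$ with $\mu_{k}:=\mu_{ij_{k}}$ and $U_{k}=(1-z_{i}z_{j_{k}},\,i(1+z_{i}z_{j_{k}}),\,z_{i}+z_{j_{k}})^{T}$, and compute the cross products of the $U_{k}$ componentwise. A short calculation yields the uniform factorisation
\begin{equation*}
U_{k}\times U_{l}=(z_{j_{k}}-z_{j_{l}})\,V_{i},\qquad V_{i}:=-i\bigl(1-z_{i}^{2},\;i(1+z_{i}^{2}),\;2z_{i}\bigr)^{T},
\end{equation*}
where $V_{i}\in\field{C}^{3}$ is the null vector depending only on $z_{i}$. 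Setting $u_{k}:=1/(z_{j_{k}}-z_{i})$ and using $\tfrac{z_{j_{k}}-z_{j_{l}}}{(z_{j_{k}}-z_{i})(z_{j_{l}}-z_{i})}=u_{l}-u_{k}$, one obtains $\mathcal{V}_{i}=V_{i}\sum_{k<l}\mu_{k}\mu_{l}(u_{l}-u_{k})$. With the P-labeling $\mu_{1}=\mu_{3}=+1$, $\mu_{2}=\mu_{4}=-1$, expanding the six pairs collapses the scalar sum to $u_{1}-u_{2}+u_{3}-u_{4}$, which is precisely the parallelogram identity \eqref{eq:parallel} defining a P-net. Hence $\mathcal{V}_{i}=0$ and $\vec{A}^{\theta}_{i}$ is independent of $\theta$.

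For the parallelism to $N_{i}$ I would use the $\theta$-independence just established and specialise to $\theta=\pi/2$: then $f^{\pi/2}=\tilde f$ is the C-minimal conjugate, whose face $i^{*}$ lies in the plane with normal $N_{i}$, so its vector area is automatically a scalar multiple of $N_{i}$, and hence so is $\vec{A}^{\theta}_{i}$ for every $\theta$. The main obstacle is the two bookkeeping steps above: the factorisation $U_{k}\times U_{l}=(z_{j_{k}}-z_{j_{l}})V_{i}$ and the collapse of the signed scalar sum to $u_{1}-u_{2}+u_{3}-u_{4}$. Both are elementary but demand care with signs and with the cyclic ordering induced by the P-labeling; once they are in place, everything else follows structurally from the Weierstrass representation and the definition of the associated family.
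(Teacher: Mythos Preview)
Your argument is correct and complete: the identity $\Re(e^{i\theta}u)\times\Re(e^{i\theta}v)=\tfrac12\Re(u\times\bar v)+\tfrac12\Re(e^{2i\theta}u\times v)$ holds, the factorisation $U_{k}\times U_{l}=(z_{j_{k}}-z_{j_{l}})V_{i}$ is an easy componentwise check, the six-term scalar sum does collapse to $u_{1}-u_{2}+u_{3}-u_{4}$ under the alternating P-labeling, and specialising to $\theta=\pi/2$ gives the parallelism to $N_{i}$ for free from the planarity of the C-minimal face. Nothing is missing.

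The route, however, is genuinely different from the paper's. The paper argues entirely in $\field{R}^{3}$: it splits each real edge $df(e^{*}_{k})$ into its components perpendicular and parallel to $N_{0}$, observes from \eqref{eq:aminimal} that the parallel part is exactly $-\tfrac{\mu_{k}}{2}N_{0}$, and then notes that $df^{\theta}(e^{*}_{k})^{\perp}=R_{\theta}\bigl(df(e^{*}_{k})^{\perp}\bigr)$ for a rotation $R_{\theta}$ in the plane $N_{0}^{\perp}$. The alternating signs $\mu_{1}=-\mu_{2}=\mu_{3}=-\mu_{4}$ together with closedness kill the mixed cross-product terms directly, leaving $2\vec{A}^{\theta}_{0}=df(e^{*}_{1})^{\perp}\times df(e^{*}_{2})^{\perp}+df(e^{*}_{3})^{\perp}\times df(e^{*}_{4})^{\perp}$, which is manifestly $\theta$-independent and parallel to $N_{0}$ in one stroke. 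Your approach instead stays in $\field{C}^{3}$ and exploits the null-curve structure of $\mathcal{F}$; it makes the role of the parallelogram identity \eqref{eq:parallel} completely transparent (it appears verbatim as the vanishing of your scalar sum), whereas in the paper's proof the P-net condition is hidden in the closedness of the dual $1$-form. The paper's decomposition buys parallelism and $\theta$-independence simultaneously; you trade that for a cleaner algebraic reduction and then cash in the C-minimal planarity at the end.
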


\begin{proof}
	We focus on a dual face $v^*_0 \in F^*_{int}$, which corresponds to $v_0 \in V_{int}$ (see Figure \ref{fig:indices}), and decompose the neighboring edges into components
	\[
	df(e^*_i) = df(e^*_i)^{\perp} + df(e^*_i)^{\parallel} \quad  \forall \, i=1,2,3,4
	\]
	such that
	\[
	df(e^*_i)^{\perp} \perp N_0, \quad df(e^*_i)^{\parallel} \parallel N_0.
	\]
	In particular, since Equation \eqref{eq:aminimal} implies
	\[ \langle df(e^*_i), N_0 \rangle = \mu_{i} \frac{\langle N_i,N_0 \rangle-1}{|N_i -N_0|^2}= -\frac{\mu_{i}}{2}, \]
	we have 
	\[
	df(e^*_i)^{\parallel} = -\mu_{i} \frac{N_0}{2}
	\]
	
	On the other hand, the C-minimal surface $\tilde{f}$ satisfies
	\begin{align*}
		d\tilde{f}(e^*_i) &= N_0 \times df(e^*_i) = N_0 \times df(e^*_i)^{\perp}. 
	\end{align*}
	It yields
	\begin{align*}
		df^{\theta}(e^*_i) &= df^{\theta}(e^*_i)^{\perp} + df^{\theta}(e^*_i)^{\parallel} = R_{\theta}( df(e^*_i)^{\perp} ) - \mu_i \frac{\cos \theta}{2} N_0
	\end{align*}
	where $R_{\theta}(v):= \cos \theta \,  v^{\perp} + \sin \theta \, N_0 \times v^{\perp}$ for any $v^{\perp} \perp N_0$ is a rotation in the plane $N_0^\perp$. We calculate the vector area on the dual face $v^*_0$
	\begin{align*}
		2\vec{A}^{\theta}_0 =& df^{\theta}(e^*_1) \times df^{\theta}(e^*_2) + df^{\theta}(e^*_3) \times df^{\theta}(e^*_4) \\
		=& df^{\theta}(e^*_1)^{\perp} \times df^{\theta}(e^*_2)^{\perp} + df^{\theta}(e^*_3)^{\perp} \times df^{\theta}(e^*_4)^{\perp} \\
		&- \frac{\cos \theta}{2} N_0 \times (\mu_1 df^{\theta}(e^*_2)^{\perp} - \mu_2 df^{\theta}(e^*_1)^{\perp} + \mu_3 df^{\theta}(e^*_4)^{\perp} - \mu_4 df^{\theta}(e^*_3)^{\perp}) \\
		=& df^{\theta}(e^*_1)^{\perp} \times df^{\theta}(e^*_2)^{\perp} + df^{\theta}(e^*_3)^{\perp} \times df^{\theta}(e^*_4)^{\perp} \\
		&- \mu_1 \frac{\cos \theta}{2} N_0 \times (df^{\theta}(e^*_2)+  df^{\theta}(e^*_1)+ df^{\theta}(e^*_4)+ df^{\theta}(e^*_3))^{\perp} \\
		=&	df^{\theta}(e^*_1)^{\perp} \times df^{\theta}(e^*_2)^{\perp} + df^{\theta}(e^*_3)^{\perp} \times df^{\theta}(e^*_4)^{\perp} \\
		=& R_{\theta}(df(e^*_1)^{\perp} \times df(e^*_2)^{\perp} + df(e^*_3)^{\perp} \times df(e^*_4)^{\perp})\\
		=& df(e^*_1)^{\perp} \times df(e^*_2)^{\perp} + df(e^*_3)^{\perp} \times df(e^*_4)^{\perp}			
	\end{align*}
	which is independent of $\theta$ and parallel to $N_0$. Here we made use of the property of $\mu$:
	\[
	\mu_1 = -\mu_2 = \mu_3 = -\mu_4.
	\]
\end{proof}

Now we show that discrete surfaces in the associated family from P-nets possess vanishing mean curvature vector fields.

\begin{lemma} \label{lem:edgecrossnormal}
	\begin{align*}
		df^{\theta}(e^*_{ij}) \times dN(e_{ij}) &= -\mu_{ij} \sin \theta \,(N_i + N_j)/2  \\
		\langle df^{\theta}(e^*_{ij}),dN(e_{ij}) \rangle &= \mu_{ij} \cos \theta
	\end{align*}
\end{lemma}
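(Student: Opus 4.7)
The plan is to carry out a direct edge-by-edge computation using the Weierstrass formulas \eqref{eq:aminimal} and \eqref{eq:cminimal} together with the linearity $df^\theta = \cos\theta\,df + \sin\theta\,d\tilde{f}$. Everything reduces to two building blocks: evaluating $df \times dN$, $d\tilde{f} \times dN$, $\langle df,dN\rangle$, $\langle d\tilde{f},dN\rangle$ on each interior edge $\{ij\}$, then assembling the $\theta$-dependent combination.

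First I would handle the inner-product identity. Using \eqref{eq:aminimal},
\[
\langle df(e^*_{ij}),dN(e_{ij})\rangle = \mu_{ij}\,\frac{\langle N_j-N_i,N_j-N_i\rangle}{|N_j-N_i|^2}=\mu_{ij},
\]
and from \eqref{eq:cminimal}, $d\tilde{f}(e^*_{ij})$ is perpendicular to $N_j-N_i=dN(e_{ij})$, so $\langle d\tilde{f},dN\rangle=0$. Adding the two contributions with the weights $\cos\theta,\sin\theta$ gives the second identity $\langle df^\theta,dN\rangle=\mu_{ij}\cos\theta$.

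Next I would handle the cross-product identity. The A-minimal term vanishes trivially since $df(e^*_{ij})$ is parallel to $N_j-N_i$, so $df\times dN=0$. For the C-minimal term, I would apply the BAC--CAB rule $(a\times b)\times c = b\langle a,c\rangle - a\langle b,c\rangle$ with $a=N_i$, $b=\mu_{ij}(N_j-N_i)/|N_j-N_i|^2$, $c=N_j-N_i$. This yields
\[
d\tilde{f}(e^*_{ij})\times dN(e_{ij}) = \mu_{ij}\frac{N_j-N_i}{|N_j-N_i|^2}(\langle N_i,N_j\rangle-1) - \mu_{ij}N_i.
\]
Using the key simplification $|N_j-N_i|^2 = 2(1-\langle N_i,N_j\rangle)$, the first coefficient collapses to $-\tfrac12$, and the right-hand side rearranges to $-\tfrac{\mu_{ij}}{2}(N_i+N_j)$. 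Combining as $\cos\theta\cdot 0 + \sin\theta\cdot(-\tfrac{\mu_{ij}}{2}(N_i+N_j))$ delivers the first identity.

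There is no real obstacle here; the only mildly delicate step is keeping track of signs in the BAC--CAB expansion and remembering to exploit $|N_j-N_i|^2=2-2\langle N_i,N_j\rangle$ to cancel the denominator, but both are routine. Notably the argument uses only the Weierstrass formulas \eqref{eq:aminimal}, \eqref{eq:cminimal} edge-wise and never invokes the P-labeling condition $\mu_1=-\mu_2=\mu_3=-\mu_4$ at a vertex, so the identities hold locally on each individual edge.
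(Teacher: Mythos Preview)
Your proposal is correct and follows essentially the same approach as the paper: both compute the four building blocks $df\times dN$, $d\tilde f\times dN$, $\langle df,dN\rangle$, $\langle d\tilde f,dN\rangle$ directly from \eqref{eq:aminimal} and \eqref{eq:cminimal} and then combine them linearly. The only cosmetic difference is that the paper expands $d\tilde f\times dN$ by splitting $dN=N_j-N_i$ and using the two equivalent forms $d\tilde f=N_j\times(\cdots)=N_i\times(\cdots)$ on the two pieces, whereas you apply a single BAC--CAB expansion; both routes land on $-\tfrac{\mu_{ij}}{2}(N_i+N_j)$ after the same simplification $|N_j-N_i|^2=2(1-\langle N_i,N_j\rangle)$.
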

\begin{proof}
	We have by definition
	\begin{align*}
		df(e^*_{ij}) \times dN(e_{ij}) &= 0  \\
	\end{align*}
	and
	\begin{align*}
		& d\tilde{f}(e^*_{ij}) \times  dN(e_{ij}) \\
		=& \mu_{ij} \big((N_j \times \frac{N_j-N_i}{|N_j-N_i|^2}) \times N_j - (N_i \times \frac{N_j-N_i}{|N_j-N_i|^2}) \times N_i \big)   \\
		=&-\mu_{ij} \big(\frac{N_i- \langle N_i,N_j \rangle N_j}{|N_j-N_i|^2} +  \frac{N_j-\langle N_i, N_j \rangle N_i}{|N_j-N_i|^2}\big)  \\
		=& \mu_{ij} \frac{\langle N_i, N_j \rangle -1}{|N_j-N_i|^2} (N_i+N_j)\\
		=& -\mu_{ij}(N_i+N_j)/2.
	\end{align*}
	Since  $f^\theta = \cos \theta \, f + \sin \theta \, \tilde{f}$ we get
	\[
	df^{\theta}(e^*_{ij}) \times dN(e_{ij}) = -\mu_{ij} \sin \theta \,(N_i + N_j)/2.
	\]
	On the other hand we have
	\begin{align*}
		\langle df(e^*_{ij}),dN(e_{ij}) \rangle &= \mu_{ij}, \\
		\langle d\tilde{f}(e^*_{ij}),dN(e_{ij}) \rangle&= 0.
	\end{align*}
	Hence
	\[
	\langle df^{\theta}(e^*_{ij}),dN(e_{ij}) \rangle = \mu_{ij} \cos \theta.
	\]
\end{proof}

Suppose $D$ is a P-graph with boundary. We call the faces of $D$ containing boundary edges as boundary faces. Those non-boundary faces are called interior, which form a set $F_{int}$. We denote the set of dual vertices which correspond to $F_{int}$ by $V_{int}^*$. Applying Proposition \ref{lem:vecareaconst} and Lemma \ref{lem:edgecrossnormal}, we have the following.
\begin{theorem}
	Given a P-graph $D=(V,E,F)$ and a P-net $z:V \to \field{R}^2 \cong \field{C}$, we let $f^{\theta}:V^* \to \field{R}^3$ be a discrete surface in the corresponding associated family with non-vanishing vector area $\vec{A}:F^*_{int} \to \field{R}^3 \backslash \{0\}$. We denote the inverse stereographic projection of $z$ by $N$ and define $\sigma:F^*_{int} \to \pm 1$ via
	\[
	\sigma:= \langle N, \vec{A}/|\vec{A}| \rangle.
	\]
	Then the mean curvature vector field $\vec{H}_{\sigma}^{\theta}:V^*_{int} \to \field{R}^3$ vanishes identically, i.e. for each dual vertex $\phi^*=(v_1,v_2,\dots,v_n)^*\in V^*_{int}$
	\begin{equation}\label{eq:vanmeancurvertex}
		\vec{H}^{\theta}_{\phi^*} := \frac{1}{2}\sum_{i} dN(e_{i,i+1}) \times  df^{\theta}(e^*_{i,i+1})=0
	\end{equation}
	and furthermore
	\begin{equation}\label{eq:vandotvertex}
		\sum_{i} \langle dN(e_{i,i+1}), df^{\theta}(e^*_{i,i+1}) \rangle =0.
	\end{equation}
	In particular if $D$ is compact, then $f^{\theta}$ is a critical point of the total area $\sum \sigma|\vec{A}|$ under infinitesimal deformations with boundary fixed.
\end{theorem}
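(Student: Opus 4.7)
The plan is to evaluate the two sums in \eqref{eq:vanmeancurvertex} and \eqref{eq:vandotvertex} edge by edge via Lemma \ref{lem:edgecrossnormal}, and then exhaust the cancellation around each face using the alternation of the P-labeling. Fix an interior face $\phi=(v_1,v_2,\dots,v_n,v_{n+1}=v_1)$ of $D$. Since $\phi$ contains no boundary edges, all of its edges lie in $E_{int}$ and $\mu_{i,i+1}$ is defined. Lemma \ref{lem:edgecrossnormal} then gives
\begin{align*}
dN(e_{i,i+1})\times df^{\theta}(e^*_{i,i+1}) &= \tfrac{1}{2}\,\mu_{i,i+1}\sin\theta\,(N_i+N_{i+1}),\\
\langle dN(e_{i,i+1}),\,df^{\theta}(e^*_{i,i+1})\rangle &= \mu_{i,i+1}\cos\theta.
\end{align*}

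The combinatorial input is that the edge labels $\mu_{i,i+1}$ alternate in sign around $\phi$. By Definition \ref{def:pnet}, at every interior vertex $v$ of $D$ the P-labeling assigns equal labels to opposite edges, hence opposite labels to edges that are consecutive in the cyclic order at $v$. The two boundary edges $e_{i-1,i}$ and $e_{i,i+1}$ of $\phi$ meeting at $v_i$ are consecutive in that cyclic order, so $\mu_{i-1,i}=-\mu_{i,i+1}$. Writing $\mu_{i,i+1}=(-1)^{i-1}\mu_{12}$ and using that $n$ is even (again by Definition \ref{def:pnet}), a direct telescoping yields
\begin{equation*}
\sum_{i=1}^{n}\mu_{i,i+1}(N_i+N_{i+1})=0\quad\text{and}\quad\sum_{i=1}^{n}\mu_{i,i+1}=0,
\end{equation*}
which proves \eqref{eq:vanmeancurvertex} and \eqref{eq:vandotvertex}.

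For the variational claim, Proposition \ref{lem:vecareaconst} forces $\vec{A}^{\theta}$ to be parallel to $N$ on every bounded dual face, so $\sigma=\langle N,\vec{A}^{\theta}/|\vec{A}^{\theta}|\rangle=\pm 1$ is well defined and $N_{\sigma}:=\sigma\vec{A}^{\theta}/|\vec{A}^{\theta}|$ coincides with $N$. Equation \eqref{eq:vanmeancurvertex} is then exactly the statement that the mean curvature vector field $\vec{H}^{\theta}_{\sigma}$ from Proposition \ref{thm:gradarea} vanishes on every interior dual vertex $\phi^*\in V^*_{int}$. Applying Proposition \ref{thm:gradarea} to $f^{\theta}$, viewed as a realization of the compact complex $M^*$, produces the critical point property for deformations that fix the boundary.

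The principal obstacle is the combinatorial step: unpacking Definition \ref{def:pnet} to see that adjacency in the cyclic order at $v_i$ forces opposite signs and that evenness of $n$ makes the telescoping close up. Once this is in hand, the edgewise identities of Lemma \ref{lem:edgecrossnormal} reduce \eqref{eq:vanmeancurvertex} and \eqref{eq:vandotvertex} to pure cancellation, and the reduction to Proposition \ref{thm:gradarea} via Proposition \ref{lem:vecareaconst} is then purely formal.
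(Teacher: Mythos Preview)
Your proof is correct and follows essentially the same route as the paper: apply Lemma~\ref{lem:edgecrossnormal} edgewise, use the alternation $\mu_{i-1,i}=-\mu_{i,i+1}$ around each interior face together with the evenness of $n$ to force both sums to vanish, and then invoke Proposition~\ref{lem:vecareaconst} to identify $N_\sigma$ with $N$ so that Proposition~\ref{thm:gradarea} applies. The only cosmetic difference is that the paper regroups the cross-product sum by vertex as $\sum_i(\mu_{i-1,i}+\mu_{i,i+1})N_i$, whereas you regroup it by edge as $\sum_i\mu_{i,i+1}(N_i+N_{i+1})$; these are of course the same sum.
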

\begin{proof}
	Proposition \ref{lem:vecareaconst} shows that the function $N_{\sigma}:F^*_{int} \to \field{S}^2$ satisfies
	\[
	N_{\sigma} := \sigma \vec{A}/|\vec{A}| = N.
	\]
	Then from Lemma \ref{lem:edgecrossnormal}, we have for each dual vertex $\phi^*=(v_1,v_2,\dots,v_n)^*\in V^*_{int}$
	\begin{align*}
		\vec{H}^{\theta}_{\phi^*} = \frac{1}{2}\sum_{i} dN(e_{i,i+1}) \times df^{\theta}(e^*_{i,i+1}) 
		=& \frac{\sin \theta}{4} \sum_i ( \mu_{i-1,i}+ \mu_{i,i+1}) N_i =0
	\end{align*}
	and
	\begin{align*}
		\sum_{i} \langle dN(e_{i,i+1}),  df^{\theta}(e^*_{i,i+1})\rangle =& \cos \theta \sum_{i} \mu_{i,i+1} = 0
	\end{align*}
	since the number of edges in a polygon $\phi$ is even.
\end{proof}

\begin{remark}
	The summations in \eqref{eq:vanmeancurvertex} and \eqref{eq:vandotvertex} are taken around a vertex of $f^{\theta}$ while those in \eqref{eq:vanmeancurfac} and \eqref{eq:vandotfac} are taken around a face. See Remark \ref{rmk:sumsmooth} for their analogue in the smooth theory.
\end{remark}

\begin{figure}[h]
	\centering
	\includegraphics[width=0.8\textwidth]{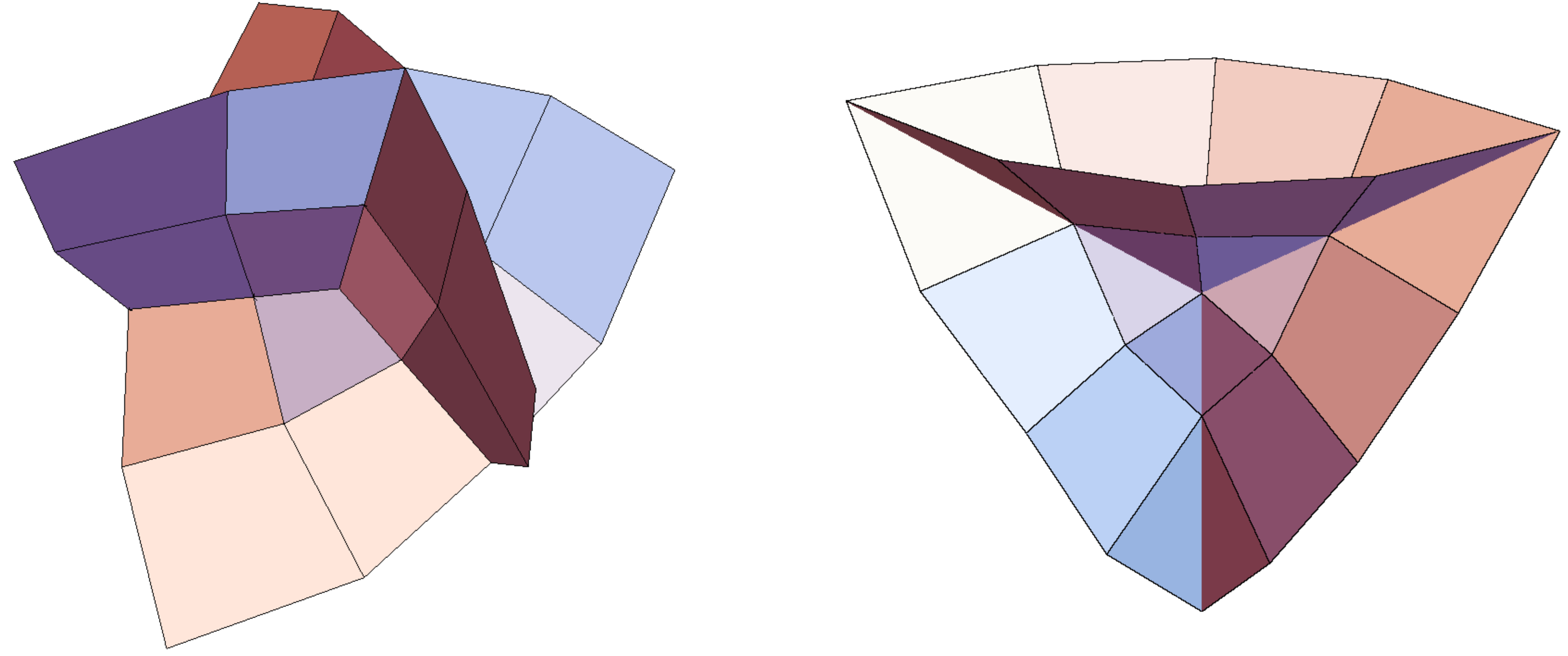}
	\caption{An A-minimal surface (left) and a C-minimal surface (right) obtained from the orthogonal circle pattern in Figure \ref{fig:planarcirc}. These discrete minimal surfaces are critical points of the total area.}
	\label{fig:circlepattern}
\end{figure}

\end{document}